\newtheorem{lemma}{Lemma}
\newtheorem{proposition}[lemma]{Proposition}
\newtheorem{theorem}[lemma]{Theorem}
\newtheorem{corollary}[lemma]{Corollary}
\newtheorem{definition}{Definition}
\newtheorem{remark}{Remark}
\theoremstyle{definition}
\newtheorem{example}{Example}
\newcommand{\A}{{\mathbb A}}
\newcommand{\B}{{\mathbb B}}
\newcommand{\N}{{\mathbb N}}
\newcommand{\R}{{\mathbb R}}
\newcommand{\C}{{\mathbb C}}
\newcommand{\D}{{\mathbb D}}
\newcommand{\cB}{\mathcal{ B}}
\newcommand{\cE}{\mathcal{ E}}
\newcommand{\cF}{\mathcal{ F}}
\newcommand{\cH}{\mathcal{ H}}
\newcommand{\cK}{\mathcal{ K}}
\newcommand{\cL}{\mathcal{L}}
\newcommand{\cM}{\mathcal{ M}}
\newcommand{\cN}{\mathcal{ N}}
\newcommand{\cT}{\mathcal{ T}}
\newcommand{\Dom}{{\rm Dom}}
\newcommand{\Spec}{{\rm Spec}}
\newcommand{\Span}{{\rm span}}
\renewcommand{\Re}{{\rm Re}\;}
\renewcommand{\Im}{{\rm Im}\;}
\newcommand{\dist}{{\rm dist}}
\newcommand{\dsc}{\mathrm{disc}}
\newcommand{\ess}{\mathrm{ess}}
\newcommand{\rank}{\mathrm{Rank}}
\newcommand{\dis}{\mathrm{dis}}
\newcommand{\wto}{\rightharpoonup}
\renewcommand{\emptyset}{\varnothing}
\renewcommand{\chi}{\mathbb{1}}
\begin{document}
\title{On the convergence of second order spectra and multiplicity}
\author{Lyonell Boulton}
\address{Department of Mathematics and Maxwell Institute for Mathematical 
Sciences, Heriot-Watt University, Edinburgh
EH14 4AS, United Kingdom}
\email{L.Boulton@hw.ac.uk}

\author{Michael Strauss}
\address{Department of Mathematics and Statistics, University of Strathclyde, Glasgow G1 1XH, United Kingdom}
\email{Michael.Strauss@strath.ac.uk}

\date{May 2010}
\maketitle
\begin{abstract}
{\noindent Let $A$ be a self-adjoint operator acting on a Hilbert space.
The notion of second order spectrum of $A$ relative to a given finite-dimensional
subspace $\cL$ has been studied recently in connection with 
the phenomenon of spectral pollution in the Galerkin method. We establish in this paper a general framework allowing us to determine how the second order spectrum encodes precise information about the multiplicity of the isolated eigenvalues of $A$. Our theoretical findings are supported by various numerical experiments on the computation of inclusions for eigenvalues of benchmark differential operators via finite element bases.
\\\\
Keywords: second order spectrum, convergence to the spectrum, spectral pollution, projection methods, Galerkin method.\\\\
2000 Mathematics Subject Classification: 65N12, 37L65.}
\end{abstract}

\tableofcontents

\pagebreak

\section{Introduction}

Let $A$ be a self-adjoint operator acting on an infinite
dimensional Hilbert space $\mathcal{H}$ and let $\lambda$ be an isolated
eigenvalue of $A$. For $\mathcal{I}\subset \mathbb{R}$ let
\[
  \chi_{\mathcal{I}}(A)=\int_{\mathcal{I}}\, \mathrm{d} E_\mu
\]
where $E_\mu$ is  the spectral measure associated to $A$.
The numerical estimation of $\lambda$ whenever
\begin{equation} \label{pol_cond}
   \operatorname{Tr} \chi_{(-\infty,\lambda)}(A)
   =\operatorname{Tr} \chi_{(\lambda,\infty)}(A)=\infty,
\end{equation}
constitutes a serious challenge in computational spectral theory. Indeed, it is well established that classical approaches, such as the Galerkin method, suffer from variational collapse under no further restrictions on the approximating space
and therefore might lead to spectral pollution \cite{ar,bbg,bdg,ds,ha,lese,pok,rssv}.

The notion of second order relative spectrum, originated from \cite{da0} (see Definition~\ref{sosp} below), and has recently allowed the formulation of a general pollution-free strategy for eigenvalue computation. This was proposed in \cite{shar,lesh} and subsequently examined in \cite{bo1,bo2,bost,str}. Numerical implementations of the general principle very much preserve the spirit of the Galerkin method and have presently been tested on applications from  Stokes systems \cite{lesh}, solid state physics \cite{bole}, magnetohydrodynamics \cite{str} and relativistic quantum mechanics \cite{bobo}.

In this paper we examine further the potential role of this pollution-free technique for robust computation of spectral inclusions. Our goal is two-folded. On the one hand, we establish various abstract properties of limit sets of second order relative spectra. On the other hand, we report on the outcomes of various numerical experiments. Both our theoretical and practical findings indicate that second order spectra provide reliable information about the multiplicity of any isolated eigenvalue of $A$.

Section~\ref{basics} is devoted to reformulating some of the concepts from \cite{shar,lesh,bo1,bo2,bost} allowing a more general setting. In this framework, we consider the natural notion of algebraic and geometric multiplicity of second order spectral points (Definition~\ref{multip}) and establish a ``second order'' spectral mapping theorem (Lemma~\ref{l:mt}).

In Section~\ref{near_real} we pursue a detailed analysis of accumulation points of the second order relative spectra on the real line. Our main contribution (Theorem~\ref{thm4})  is a significant improvement upon similar results previously found in \cite{bo1,bo2}. It allows calculation of rigourous convergence rates when the test subspaces are generated by a non-orthogonal basis. Concrete applications include the important case of a finite element basis which was not covered by \cite[Theorem~2.1]{bo2}. Our present approach relies upon an homotopy argument which yields a precise control on the multiplicity of the second order spectral points. The argument is reminiscent of the method of proof of Goerisch Theorem; \cite{Goerisch:1987,plum:1990}.  

Theorem~\ref{thm4} also determines the precise manner in which second order spectra encode information about the multiplicity of points in the spectrum of $A$.
When an approximating space is ``sufficiently close'' to the eigenspace corresponding to an eigenvalue of finite multiplicity, a finite set of conjugate pairs in the second order spectrum becomes ``isolated'' and clusters near the eigenvalue. It turns out that the total multiplicity of these conjugate pairs exactly matches the multiplicity of the eigenvalue. This indicates that second order spectra detects in a reliable manner points in the discrete spectrum and their multiplicities, even under the variational collapse condition \eqref{pol_cond}. In Section~\ref{applica} we examine the practical validity of this statement on benchmark differential operators for subspaces generated by a basis of finite elements. 

The final section is aimed at finding the minimal region in the complex plane where the limit of second order spectra is allowed to accumulate. 
It turns out that, modulo a subset of topological dimension zero, this minimal region is completely determine by the essential spectrum of $A$. This gives an insight
on the difficulties involving the problem of finding conditions on the test subspaces 
to guarantee convergence to the essential spectrum.

\subsection{Notation}
Below we will denote by $\Dom(A)$ the domain of $A$ and by
$\Spec(A)$ its spectrum. We decompose $\Spec(A)$ in the standard
manner as the union of essential and discrete spectrum:
\begin{align*}
&\Spec_{\textrm{ess}}(A) := \{\lambda\in \Spec(A)\,:\, \exists x_n\in \Dom(A),\,
\|x_n\|=1,\, x_n\wto 0,\, (A-\lambda)x_n\to 0\},\\
&\Spec_{{\dis}}(A) :=
\Spec(A)\backslash\Spec_{\textrm{ess}}(A).
\end{align*}
The discrete spectrum is the set of isolated eigenvalues of finite multiplicity
of $A$.

Let $(a,b)\subset \R$. Below $\mathbb{D}(a,b)$ will be the open disc
in the complex plane with centre $(a+b)/2$ and radius $(b-a)/2$, and
$\D[a,b]=\overline{\D(a,b)}$. We allow $a=-\infty$
or $b=+\infty$ in the obvious way to denote half-planes or the whole of $\C$.
For $b=a$, $\D[a,a]=\{a\}$ and $\D(a,a)=\varnothing$.

Let $\cK$ be a Hilbert space. Let $\Omega \subset \cK$ be an arbitrary subset and let $\cB\subset \cK$ be a finite subset. We will denote
\[
\dist_{\cK}(\cB,\Omega):=\max_{u\in \cB}\inf_{v\in \Omega} \|u-v\|_\cK
\qquad \text{and} \qquad \dist_{\cK}(u,\Omega)=\dist_{\cK}(\{u\},\Omega).
\]
Here we include the possibility of $\cK \equiv \C$ and write $\dist(u,\Omega)=\dist_\C(u,\Omega)$.

Let $p\in \N\cup\{0\}$. We endow $\Dom(A^p)$ with the graph inner product
\[
      \langle u,v\rangle_{p}:=\sum_{q=0}^p \langle A^qu,A^qv\rangle\quad\forall u,v\in \Dom(A^p)
\]
so that $(\Dom(A^p),\langle\cdot,\cdot\rangle_{p})$ is a Hilbert space with the associated norm denoted by $\Vert\cdot\Vert_p$.
Below we will consider sequences of finite-dimensional subspaces $(\cL_n)\equiv(\cL_n)_{n\in \N}$ growing towards $\Dom(A^p)$ with a given density property determined as follows:
\[
   \Lambda_p=\{(\cL_n)\subset \Dom(A^p):  \forall f\in
   \Dom(A^p),\, \dist_{\Dom(A^p)}(f,\cL_n)\to 0 \}.
\]
Note that if $\|A\|<\infty$, then $\Lambda_0=\Lambda_p$ for any $p\in \N$.
If $(\cL_n)\subset \Lambda_p$ and $\cL_n\subset \cL_{n+1}$, then
$\cup_{n=1}^\infty \cL_n$ is dense in the graph norm of $A^p$.

For a family of closed subsets $\Omega_n\subseteq \C$, the limit set of this family is defined as
\[
     \lim_{n\to \infty} \Omega_n=\{z\in \C:\exists z_n\in \Omega_n,\,
     z_n\to z\}=\{z\in \C:\dist(z,\Omega_n)\to 0\}.
\]
The limit set of a family of closed subsets is always closed.

\begin{remark}
Below we will establish properties of $\lim_{n\to\infty}\Spec_2(A,\cL_n)$ for
sequences $(\cL_n)\in \Lambda_p$. By similar arguments to those presented in this paper, one can establish analogies to all these properties for the ``weak'' limit set
\[
     \bigcap _{n=1}^\infty \overline{\bigcup _{m=n}^\infty
     \Spec_2(A,\cL_m)}
\]
if we impose that for all $f\in \Dom(A^p)$ exists a subsequence $f_{n(k)}\in \cL_{n(k)}$ such that $\|f_{n(k)}- f\|_p\to 0$.
\end{remark}

Let  $b_1,\dots,b_n$ be basis for a subspace $\mathcal{L}\subset \cH$.
Without further mention, we will identify the elements $u\in\mathcal{L}$
with a corresponding $\underline{u}\in\C^n$ in the standard manner:
\[
    u=\sum_{k=1}^n \langle u,b_j^\ast \rangle b_j\qquad \text{and}
    \qquad \underline{u}=(\langle u,b_1^\ast\rangle,\dots,\langle u,b_n^\ast\rangle),
\]
where $\{b_j^\ast\}$ is the basis conjugate to $\{b_j\}$.
If the $b_j$ are mutually orthogonal and $\|b_j\|=1$,
then $b_j^\ast=b_j$. Below we will denote the orthogonal projection onto $\cL$ by $P:\cH\longrightarrow \cL$. When referring to sequences of subspaces $(\cL_n)$ we will use $P_n$ instead.
Note that $(\cL_n)\in \Lambda_0$ iff $P_n\to I$ in the strong operator topology.

\subsection{Toy models of spectral pollution in the Galerkin method}
We now present a series of examples which will later illustrate some of
the results below. See \cite[Theorem~2.1]{lesh} and \cite[Remark~2.5]{lese}.

\begin{example} \label{ex1}
Let $\cH=\Span\{e_n^\pm\}_{n=1}^\infty$ where $e_n^\pm$ is an orthonormal set of vectors and let $\cL_n=\Span\{e^\pm_1,\ldots,e^\pm_{n-1},e^-_n\}$.
Let $f^\pm_n=\frac{1}{\sqrt{2}}(e_n^+\pm e_n^-)$ and define
\[A=\sum_{n\in\N} n|f^+_n\rangle \langle f^+_n|-\sum_{n\in\N} n|f^-_n\rangle \langle f^-_n|\] in its maximal domain.  Then
\[
   \Spec(P_n A\upharpoonright \cL_n)=\{0,\pm 1,\ldots,\pm(n-1)\}
\]
and so
\begin{equation} \label{exa}
    \lim_{n\to \infty}\Spec(P_n A\upharpoonright \cL_n)\setminus \Spec(A)=\{0\}\not=\varnothing.
\end{equation}
Note that the resolvent of $A$ is compact.
\end{example}

\begin{example}\label{com}
If $A$ is strongly indefinite and it has a compact resolvent, then there exists a sequence $(\mathcal{L}_n)\in\Lambda_p$ for all $p\in\mathbb{N}$ such that
\begin{equation} \label{totpol}
\lim_{n\to\infty}\Spec(P_nA\upharpoonright\cL_n) = \mathbb{R}.
\end{equation}
Indeed, let $\Spec(A)=\{\lambda_m^\pm\}_{m\in\N}$
where the eigenvalues are repeated according to multiplicity with $\lambda_{m+1}^-\le\lambda_m^-<0$ and $0\le\lambda_{m}^+\le\lambda_{m+1}^+$.
Let $e_m^\pm$ be eigenvectors associated to $\lambda^\pm_{m}$ and assume that $\{e^\pm_m\}$ is an orthonormal set. Let $\{\gamma_m\}_{j\in\N}$ be an ordering of $\mathbb{Q}$. For each $n\in\mathbb{N}$ choose $n<k\in\mathbb{N}$ with $\lambda^-_k<\gamma_j<\lambda^+_k$ for $1\le j\le n$. Let $\theta_{j}\in (-\pi,\pi]$ be such that
\[
    \gamma_j=\cos^2(\theta_{j})\lambda^-_{k+j}+\sin^2(\theta_{j})
    \lambda^+_{k+j}\quad\textrm{for}\quad1\le j\le n
\]
and define
\[
     f_{nj}=\cos(\theta_{j}) e^-_{k+j}+\sin(\theta_{j}) e^+_{k+j},
\]
then \eqref{totpol} holds for the subspaces
\[
\mathcal{L}_n\equiv\Span\{e^\pm_1,\ldots,e^\pm_n,f_{n1},\ldots,f_{nn}\}
 \subset \Dom(A^p).
\]
\end{example}

\begin{example} \label{ex3}
Let $\cH$ and $\cL_n$ be as in Example~\ref{ex1}.
Let
\[
f^\pm_n=\sin(1/n)e_n^\mp \pm \cos(1/n)e_n^\pm.
\]
For $r>0$, define
$A=\sum_{n\in\N} n^r |f^+_n\rangle \langle f^+_n|-\sum_{n\in\N} |f^-_n\rangle \langle f^-_n|$ in its maximal domain. Then
\begin{gather*}
   \Spec_\ess(A)=\{-1\} \qquad\text{and} \qquad \Spec_\dis(A)=\{n^r\}_{n\in\N}.
\end{gather*}
It is not difficult to see that now
\[
   \Spec(P_n A\upharpoonright \cL_n)=\{-1,n^r\sin(1/n)^2-\cos(1/n)^2,1,\ldots,(n-1)^r\}
\]
where $-1$ is an eigenvalue of multiplicity $n$. Then, if $r=2$,
\eqref{exa} holds true. Note that the resolvent of $A$ is not compact and $A$ is now semi-bounded below.
\end{example}


\section{The second order spectra of a self-adjoint operator}
\label{basics}

\begin{definition}[See \cite{lesh}] \label{sosp} Given a subspace $\mathcal{L}\subset \Dom(A)$, the second order spectrum of $A$ relative to $\cL$ is the set
\[
   \Spec_2(A,\cL):=\{z\in \C\,:\,\exists u\in \cL\backslash\{0\}
,\,\langle (A-zI)u,(A-\overline{z}I)v\rangle =0\quad\forall v\in \cL\}.
\]
\end{definition}
If $\cL\subset \Dom(A^2)$, then
\begin{align*}
    \Spec_2(A,\cL)&=\{z\in \C:\exists u\in \cL\backslash\{0\},\langle P(A-z)^2 u,v\rangle=0
    \quad \forall v\in \cL\} \\
      &=\{z\in \C: 0\in \Spec(P(A-z)^2\upharpoonright \cL)\}.
\end{align*}
Typically $\Spec_2(A,\cL)$ contains non-real points. From the definition it is easy to see that  $z\in \Spec_2(A,\cL)$ if and only if $\overline{z}\in \Spec_2(A,\cL)$.

\subsection{Algebraic and geometric multiplicity}
Let \[\cL=\Span\{b_j\}_{j=1}^n\subset \Dom(A)\] where the $b_j$ are linearly independent. Let $B,L,M\in \C^{n\times n}$ be matrices with entries
given by
\begin{equation}\label{matrices}
B_{jk} = \langle Ab_k,Ab_j\rangle,\quad L_{jk} = \langle Ab_k,b_j\rangle,\quad M_{jk} = \langle b_k,b_j\rangle.
\end{equation}
With $Q(z)=B - 2zL + z^2M$, it is readily seen that $z\in\Spec_2(A,\mathcal{L})$ if and only if
\begin{equation}\label{qep}
\exists \underline{u}\in\C^n\setminus\{0\} \quad \text{with}\quad
Q(z)\underline{u} = \underline{0},
\end{equation}
that is, $\Spec_2(A,\mathcal{L}) = \Spec(Q(z))$. Evidently, $\Spec_2(A,\cL)$ is independent of the basis chosen, and since $\det M\not=0$, it follows that $\Spec_2(A,\cL)$ consists of at most $2n$ points. The quadratic eigenvalue problem \eqref{qep} may be solved via suitable linearisations, cf. \cite[Chapter 12]{glr2} for details. For example, if
\begin{equation*}
T =\begin{pmatrix} 0 & I \\ -B & 2L \end{pmatrix}
\quad \text{and}\quad
S =\begin{pmatrix} I & 0\\ 0 & M \end{pmatrix},
\end{equation*}
then $z\in\Spec_2(A,\mathcal{L})$ if and only if
$(T - z S)\underline{v}=0$ for some $\underline{v}\not=\underline{0}$. Equivalently, one can consider the matrix $\mathcal{T}$ where
\begin{equation} \label{linmatrix}
\mathcal{T}:=S^{-1}T =\begin{pmatrix} 0 & I \\ -M^{-1}B & 2M^{-1}L \end{pmatrix},
\end{equation}
clearly $\Spec(\mathcal{T})=\Spec(Q(z))=\Spec_2(A,\mathcal{L})$. Consider also the non-singular matrices
\begin{displaymath}
E(z) :=\begin{pmatrix} zM - 2L & M \\ I & 0 \end{pmatrix}\quad\textrm{and}\quad
F(z) :=\begin{pmatrix} I & 0 \\ zI & I \end{pmatrix}.
\end{displaymath}
A straight forward calculation yields $Q(z)\oplus (-I) = E(z)(zI - \mathcal{T})F(z)$. The matrices $Q(z)\oplus (-I)$ and $(zI - \mathcal{T})$ are said to be equivalent. Also, every matrix polynomial $L(z)$ is equivalent to a diagonal matrix polynomial
\begin{displaymath}
\textrm{diag}\big[i_1(z),i_2(z),\dots,i_r(z),0,\dots,0\big]
\end{displaymath}
where the diagonal entries $i_j(z)$ are polynomials of the form $\Pi_k(z - z_{jk})^{\beta_{jk}}$ with the property that $i_j(z)$ is divisible by $i_{j-1}(z)$. The factors $(z - z_{jk})^{\beta_{jk}}$ are called elementary divisors and the $z_{jk}$ are eigenvalues of $L(z)$. The degrees of the elementary divisors associated to a particular eigenvalue $z_0$ are called the partial multiplicities of $L(z)$ at $z_0$; see \cite[Theorem A.6.1]{glr2} for further details. For a linear matrix polynomial (such as $\mathcal{T} - zI$) the degrees of the elementary divisors associated to a particular eigenvalue $z_0$ coincide with the sizes of the Jordan blocks associated to $z_0$ as an eigenvalue of $\mathcal{T}$; see \cite[Theorem A.6.3]{glr2}. The notion of multiplicity of an eigenvalue now follows from the fact that two $n\times n$ matrix polynomials are equivalent if and only if they have the same collection of elementary divisors; see \cite[Theorem A.6.2]{glr2}.

\begin{definition} \label{multip}
The geometric and algebraic multiplicity of an element $z\in\Spec_2(A,\cL)$ are defined to be the geometric and algebraic multiplicity of $z$ as an eigenvalue of $\mathcal{T}$.
\end{definition}

Note that the definition of geometric and algebraic multiplicity is independent of the basis used to assemble the matrices $B,L,M$ and $\mathcal{T}$. Indeed, let $B,L,M$ be assembled with respect a basis $b_1,\dots,b_n$ and $\tilde{B},\tilde{L},\tilde{M}$ be assembled with respect a basis $c_1,\dots,c_n$, where $\cL=\Span\{b_j\}_{j=1}^n = \Span\{c_j\}_{j=1}^n$. Then,
\[
\tilde{\mathcal{T}}:=\begin{pmatrix} 0 & I \\ -\tilde{M}^{-1}\tilde{B} & 2\tilde{M}^{-1}\tilde{L} \end{pmatrix} = \begin{pmatrix} N^{-1} & 0 \\ 0 & N^{-1} \end{pmatrix}\begin{pmatrix} 0 & I \\ -M^{-1}B & 2M^{-1}L \end{pmatrix}\begin{pmatrix} N & 0 \\ 0 & N \end{pmatrix}
\]
where $N_{ij} = \langle c_j,b^*_i\rangle$, and therefore $\tilde{\mathcal{T}}$ and $\mathcal{T}$ are equivalent.

\subsection{The approximate spectral distance} Suppose that the given basis $\{b_j\}_{j=1}^n$ of $\cL$ is orthonormal so that $M=I$ in \eqref{matrices}. Let
$Q(z)=B - 2zL + z^2I$. For $z\in \C$ we define
\[
   \sigma_{A,\cL}(z)=\min_{\underline{v}\in\C^n}\frac{\|Q(z)\underline{v}\|}{\|\underline{v}\|}.
\]
Note that the right hand side is independent of the orthonormal basis chosen for $\cL$. If $\cL\subset \Dom(A^2)$, then
\[
    \sigma_{A,\cL}(z)=\min_{v\in\cL}\frac{\|P(A-z)^2v\|}{\|v\|},
\]
and for $z\not\in \Spec_2(A,\cL)$ we have $\sigma_{A,\cL}(z)=\|Q(z)^{-1}\|^{-1}$. Furthermore, $z\in \Spec_2(A,\cL)$ if and only if $\sigma_{A,\cL}(z)=0$.
The map $\sigma_{A,\cL}:\C\longrightarrow [0,\infty)$ is a Lipschitz
subharmonic function; see \cite[Lemma~1]{da0}, \cite[Lemma~4.1]{bo2} and \cite[Theorem~2]{blp}. Therefore $\Spec_2(A,\cL)$ is completely characterised via
the maximum principle.

\subsection{The spectrum and the second order spectra}
Let
\[
\lambda_{\min}(A) =
\inf [\Spec(A)] \quad\textrm{and}\quad
\lambda_{\max}(A) =
\sup [\Spec(A)].
\]
For all $\cL\subset \Dom(A)$,
\begin{equation}\label{eugene0}
    \Spec_2(A,\cL)\subset \D[\lambda_{\min}(A),\lambda_{\max} (A)];
\end{equation}
see \cite[Theorem 3.1]{shar} and \cite[Corollary 2.1]{str}. Thus
\[
    \lim_{n\to\infty}\Spec_2(A,\cL_n)\subseteq \D[\lambda_{\min}(A),\lambda_{\max} (A)]
\]
for any sequence $(\cL_n)$.

A growing interest in the second order relative spectrum and corresponding limit sets has been stimulated by the following property: if $(a,b)\cap\Spec(A)=\emptyset$, then
\begin{equation}\label{eugene1}
\Spec_2(A,\mathcal{L})\cap \mathbb{D}(a,b) = \varnothing \qquad \qquad \forall
\cL\subset \Dom(A);
\end{equation}
see \cite[Theorem 5.2]{lesh} or Lemma~\ref{lem1} below.
Thus,
\begin{equation}\label{eugene2}
\Spec(A)\cap\big[\Re z - \vert\Im z\vert,\Re z + \vert\Im
z\vert\big]\ne\varnothing \qquad \text{whenever}\qquad z\in\Spec_2(A,\mathcal{L});
\end{equation}
see \cite[Corollary 4.2]{shar} and \cite[Theorem 2.5]{lesh}.
Thus inclusions of points in the spectrum of $A$ are achieved from $\Re z$ with a two-sided explicit residual given by $|\Im z|$.

In fact, the order of magnitude of the residue in the approximation of $\Spec(A)$ by projecting $\Spec_2(A,\cL)$ into $\R$ can be improved to $|\Im z|^2$, if some information on the localisation of $\Spec(A)$ is at hand. Indeed, if $(a,b)\cap\Spec(A)=\{\lambda\}$ and $z\in \Spec_2(A,\mathcal{L})$ with $z\in \mathbb{D}(a,b)$, then
\begin{equation}\label{strauss}
\bigg{[}\Re z
- \frac{\vert\Im z\vert^2}{b-\Re z},\Re z
+ \frac{\vert\Im z\vert^2}{\Re z - a}\bigg{]}\cap\Spec(A)=\{\lambda\};
\end{equation}
see \cite[Corollary~2.6]{bole} and \cite[Theorem 2.1 and Remark 2.2]{str}.

The following lemma is an improvement upon \cite[Theorem~5.2]{shar}. It immediately implies property \eqref{eugene1}.

\begin{lemma}\label{lem1}
Let $a,b\in \R$ be such that $(a,b)\cap \Spec(A)=\varnothing$. If $z\in \D(a,b)$, then
\begin{equation}\label{lower_bd_sigma}
 \sigma_{A,\cL}(z) \geq \alpha(z)
\end{equation}
where
\begin{equation} \label{alpha}
     \alpha(z) =\alpha_{(a,b)}(z)= \frac{(b-a)^2-|z-a|^2-|z-b|^2}{2|z-b||z-a|}\dist(z,\{a,b\})^2>0.
\end{equation}
\end{lemma}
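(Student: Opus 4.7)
The strategy is to reduce the claim to a pointwise complex-plane inequality and then to verify the latter by a direct computation. Start from the Cauchy--Schwarz estimate $\|Q(z)\underline v\|\,\|\underline v\|\ge|\langle Q(z)\underline v,\underline v\rangle|$, valid for any $\underline v\in\C^n$. Since $\{b_j\}$ is orthonormal and $A$ is self-adjoint, the spectral theorem gives
\[
\langle Q(z)\underline v,\underline v\rangle=\|Av\|^2-2z\langle Av,v\rangle+z^2\|v\|^2=\int_{\R}(\lambda-z)^2\,\rmd\mu_v(\lambda),
\]
where $\mu_v$ is the scalar spectral measure of $A$ at $v$, supported in $(-\infty,a]\cup[b,\infty)$ by hypothesis. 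Hence it suffices to prove the estimate
\[
\Bigl|\int_{\R}(\lambda-z)^2\,\rmd\mu_v(\lambda)\Bigr|\;\ge\;\alpha(z)\,\|v\|^2.
\]

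I would obtain this by producing a unit modulus scalar $c(z)\in\C$ for which
\[
\Re\bigl[c(z)\,(\lambda-z)^2\bigr]\;\ge\;\alpha(z)\qquad\text{for every }\lambda\in(-\infty,a]\cup[b,\infty);
\]
integrating against $\mu_v$ and using $|w|\ge\Re w$ then finishes the argument. The geometric picture is that, as $\lambda$ sweeps over $\R\setminus(a,b)$, the complex numbers $(\lambda-z)^2$ trace a sector of angular width strictly less than $\pi$ precisely because the inscribed angle $\angle azb$ exceeds $\pi/2$ for $z\in\D(a,b)$; rotation by $c(z)$ aligns the bisector of this sector with the positive real axis. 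Concretely, rewriting \eqref{alpha} as $\alpha(z)=-\cos(\theta_b-\theta_a)\,\dist(z,\{a,b\})^2$ with $\theta_a=\arg(a-z)$ and $\theta_b=\arg(b-z)$ leads to the natural choice
\[
c(z)\;=\;-\,\frac{\overline{(a-z)(b-z)}}{|a-z|\,|b-z|}.
\]

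The main obstacle is the verification of this pointwise inequality, which is a short but careful algebraic check. Setting $x=\Re z$, $y=\Im z$, $m=x-a$, $M=b-x$, and $s=\lambda-x$, a direct expansion of $(\lambda-z)(a-\bar z)$ and $(\lambda-z)(b-\bar z)$ shows that $|a-z|\,|b-z|\,\Re[c(z)(\lambda-z)^2]$ equals the upward-opening quadratic polynomial $f(s)=(mM+y^2)(s^2-y^2)-2y^2(M-m)s$ in $s$, whose unique critical point $s^{\ast}=y^2(M-m)/(mM+y^2)$ lies strictly inside the excluded interval $(-m,M)$. Thus $f$ is monotone on each of the rays $(-\infty,-m]$ and $[M,\infty)$ and its minimum on their union is attained at an endpoint, with values $f(-m)=(mM-y^2)(m^2+y^2)$ and $f(M)=(mM-y^2)(M^2+y^2)$. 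The hypothesis $z\in\D(a,b)$ amounts to $y^2<mM$, so both endpoint values are strictly positive; dividing by $|a-z|\,|b-z|$ and using $|a-z|^2=m^2+y^2$, $|b-z|^2=M^2+y^2$ shows that the smaller of the two resulting quotients is exactly $\alpha(z)$. Inserting this pointwise bound into the reduction above yields $\sigma_{A,\cL}(z)\ge\alpha(z)$.
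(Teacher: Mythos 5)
Your proof is correct and takes essentially the same route as the paper: both reduce, via Cauchy--Schwarz and the spectral theorem, to a pointwise bound $\Re[c(z)(\lambda-z)^2]\ge\alpha(z)$ for $\lambda\in\R\setminus(a,b)$, and your unimodular factor $c(z)=-\overline{(a-z)(b-z)}/(|a-z|\,|b-z|)$ is exactly the paper's rotation $e^{i(\theta_1-\theta_2)}$. The only difference is cosmetic: you verify the pointwise inequality by explicit endpoint minimization of the quadratic $f(s)$, whereas the paper argues geometrically with sectors and the cosine rule; your algebra checks out and recovers $\alpha(z)$ exactly.
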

\begin{proof}
Without loss of generality assume that $\Im z\ge 0$. The region \[F_z :=\{\mu-z:\mu\in\Spec(A)\}\] is contained in two sectors, one between the real line and the ray $re^{-i\theta_1}$ $(r\ge 0)$, the other between the real line and $re^{i(\pi+\theta_2)}$, where $0\leq\theta_1+\theta_2< \pi/2$. We have $\cos(\theta_1+\theta_2)>0$, and by the cosine rule,
\[
     \cos(\theta_1+\theta_2)=\frac{(b-a)^2-|z-b|^2-|z-a|^2}{2|z-b||z-a|}.
\]
Now $(F_z)^2 = \{w^2:w\in F_z\}$ is contained in a sector between the rays
$re^{-i2\theta_1}$ and $re^{i2\theta_2}$. Therefore
\begin{displaymath}
\min\{\Re y:y\in e^{i(\theta_1-\theta_2)}(F_z)^2\} = \min\{| a - z|^2,| b - z|^2\}\cos(\theta_1+\theta_2)
=\alpha(z).
\end{displaymath}
By virtue of the spectral theorem,
\begin{align*}
\Re e^{i(\theta_1-\theta_2)}\langle Q(z)\underline{v},\underline{v}\rangle_{\C^n}&=
\Re e^{i(\theta_1-\theta_2)}\langle (A-z)v,(A-\overline{z})v\rangle \\
&= \Re e^{i(\theta_1-\theta_2)}\int_\mathbb{R}(\mu-z)^2~d\langle E_\mu v,v\rangle\\
&\ge \alpha(z)  \|v\|^2.
\end{align*}
\end{proof}

From \eqref{eugene1} it follows that
\[
    \D(a,b)\cap \lim_{n\to\infty} \Spec_2(A,\cL_n)=\varnothing
    \quad \text{whenever} \quad (a,b)\cap \Spec(A)=\varnothing
\]
and
\[
    \R\cap \lim_{n\to \infty}\Spec_2(A,\cL_n)\subseteq  \Spec(A).
\]
We now consider two examples where
\begin{equation} \label{equality}
   \lim_{n\to \infty}\Spec_2(A,\cL_n)=  \Spec(A).
\end{equation}
The first one has a compact resolvent while the second one has
$-1$ in the essential spectrum.

\begin{example} \label{ex4}
Let $\cH$, $\cL_n$ and $A$ be as Example~\ref{ex1}. Then
\[
    \Spec_2(A,\cL_n)=\{\pm i n,\pm 1,\ldots,\pm (n-1)\}.
\]
\end{example}

\begin{example} \label{ex5}
Let $\cH$, $\cL_n$ and $A$ be as Example~\ref{ex3}. Then
\[
    \Spec_2(A,\cL_n)=\{\alpha_n \pm i \gamma_n,-1,1,\ldots,(n-1)^r\}
\]
where $\alpha_n=n^r\sin(1/n)^2-\cos(1/n)^2$ and
$\gamma_n=(n^r+1)\sin(1/n)\cos(1/n)$.
The geometric multiplicity of the second order spectral point
$-1$ is $n-1$ and its algebraic multiplicity is $2(n-1)$.
As $n\to \infty$, the non-real point
\[
    \alpha_n \pm i \gamma_n\to \left\{
    \begin{array}{ll}
       -1 & 0<r<1 \\
       -1 \pm i & r=1 \\
       \infty & r>1.
    \end{array} \right.
\]
Hence \eqref{equality} holds true for $r\not=1$.
\end{example}

It is naturally expected that if $(\cL_n)$ approximate
very fast a Weyl sequence for $\lambda\in \Spec(A)$, then
\[
    \lambda\in \lim_{n\to \infty} \Spec_2(A,\cL_n).
\]
This intuition can be made rigorous through the following statement which appears to be of little practical use.

\begin{proposition} \label{full_app}
Assume that $\|A\|<\infty$. Let $\lambda\in \Spec(A)$. Let $(\cL_n)\subset \cH$ be such that
$n=\dim \cL_n$. If there exists $x_n\in\cL_n$ such that $\|x_n\|=1$ and
$
   \|(A-\lambda)x_n\|^{\frac 1{2n}}=\varepsilon_n \to 0,
$
then
\[
    \D(\lambda-\delta_n,\lambda+\delta_n)\cap \Spec_2(A,\cL_n)\not=
    \varnothing,  \qquad \delta_n=2(1+\|A\|)^2\varepsilon_n,
\]
for all $n$ large enough.
\end{proposition}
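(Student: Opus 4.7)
The strategy is to derive matching lower and upper bounds on $|\det Q_n(\lambda)|$, where $Q_n(z)=B-2zL+z^2I$ is assembled from an orthonormal basis of $\cL_n$ (so $M=I$), and then derive a contradiction from the assumption that no point of $\Spec_2(A,\cL_n)$ lies in the claimed disc. The crucial preparatory step is that $\det Q_n(z)$ is a monic polynomial of degree $2n$ whose roots, counted with algebraic multiplicity, are exactly the elements of $\Spec_2(A,\cL_n)$. This follows from the linearisation identity $Q_n(z)\oplus(-I)=E(z)(zI-\mathcal{T})F(z)$ recalled in Section~\ref{basics}: with $M=I$ one has $\det E(z)=(-1)^n$ and $\det F(z)=1$, so $\det Q_n(z)=\det(zI-\mathcal{T})$, and Definition~\ref{multip} identifies its roots with $\Spec_2(A,\cL_n)$ with correct algebraic multiplicities. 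Enumerating these as $z_1,\dots,z_{2n}$, one obtains the factorisation $|\det Q_n(\lambda)|=\prod_{j=1}^{2n}|\lambda-z_j|$.

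For the upper bound, let $\sigma_1(\lambda)\geq\dots\geq\sigma_n(\lambda)=\sigma_{A,\cL_n}(\lambda)$ denote the singular values of $Q_n(\lambda)$, so that $|\det Q_n(\lambda)|=\prod_{i=1}^n\sigma_i(\lambda)$. Plugging the test vector $x_n$ into the variational characterisation of $\sigma_{A,\cL_n}$ and using $\|A-\lambda\|\leq 2\|A\|\leq(1+\|A\|)^2$ (valid since $|\lambda|\leq\|A\|$) yields
\[
\sigma_{A,\cL_n}(\lambda)\leq\|P_n(A-\lambda)^2 x_n\|\leq\|A-\lambda\|\,\|(A-\lambda)x_n\|\leq(1+\|A\|)^2\varepsilon_n^{2n},
\]
while the remaining singular values are bounded by the operator norm $\|Q_n(\lambda)\|=\|P_n(A-\lambda)^2P_n\|\leq(1+\|A\|)^4$. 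Multiplying these estimates gives $|\det Q_n(\lambda)|\leq(1+\|A\|)^{4n-2}\varepsilon_n^{2n}$.

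Suppose for contradiction that $\Spec_2(A,\cL_n)\cap\D(\lambda-\delta_n,\lambda+\delta_n)=\varnothing$; then each $|\lambda-z_j|\geq\delta_n$, hence $\delta_n^{2n}\leq|\det Q_n(\lambda)|$. Substituting $\delta_n=2(1+\|A\|)^2\varepsilon_n$ into the combined inequality and cancelling $\varepsilon_n^{2n}$ reduces it to $2^{2n}(1+\|A\|)^2\leq1$, which is patently false. This contradicts the assumption and proves the disc must contain at least one point of $\Spec_2(A,\cL_n)$.

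The principal technical obstacle is the polynomial identification $\det Q_n(z)=\det(zI-\mathcal{T})$: without it one only knows that $\Spec_2(A,\cL_n)$ consists of at most $2n$ points, which is too weak to make the product formula for $|\det Q_n(\lambda)|$ exact. Everything else reduces to a transparent Hadamard-type determinant estimate, and the basis-independence of both $\sigma_{A,\cL_n}$ and $\Spec_2(A,\cL_n)$ makes the initial orthonormal-basis reduction harmless. It is worth noting that the resulting estimate is in fact sharper than $\delta_n=2(1+\|A\|)^2\varepsilon_n$ in the limit $n\to\infty$; the factor $2$ leaves enough slack so that no refined asymptotic tracking of constants is required.
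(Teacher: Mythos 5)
Your argument is correct and is essentially the paper's own proof in a lightly different dress: both hinge on bounding the same quantity $|\det Q_n(\lambda)|=|\det(\lambda I-\cT_n)|$ from below by $\delta_n^{2n}$ under the assumption that the disc misses $\Spec_2(A,\cL_n)$, and from above by a power of a norm bound of order $(1+\|A\|)^2$ times the small residual $\|P_n(A-\lambda)^2x_n\|\le\|A-\lambda\|\,\varepsilon_n^{2n}$ supplied by the test vector. The only difference is cosmetic: you work with the $n\times n$ pencil $Q_n(\lambda)$ and a singular-value (Hadamard-type) determinant estimate, whereas the paper runs the identical computation on the $2n\times 2n$ linearisation $\cT_n-\lambda$ with the vector $x_n\oplus\lambda x_n$, quoting Kato's inequality $\|(\cT_n-\lambda)^{-1}\|\le\|\cT_n-\lambda\|^{2n-1}/|\det(\cT_n-\lambda)|$, which is the same estimate you derive.
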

\begin{proof}
Let $\cT_n=\cT$ be as in \eqref{linmatrix} for an orthonormal basis of $\cL_n$.
If
\begin{equation} \label{full_app_hyp}
\|(\cT_n-\lambda )\underline{v}\|_{\C^{2n}}\leq\tilde{\varepsilon}
\|\underline{v}\|_{\C^{2n}},
\end{equation}
then
\begin{equation} \label{full_app_key}
   \D(\lambda-\delta,\lambda+\delta)\cap \Spec_2(A,\cL)\not =\varnothing
   \quad \text{where} \quad \delta=(\tilde{\varepsilon}\|\cT_n-\lambda \|^{2n-1})^{\frac{1}{2n}}.
\end{equation}
Indeed, suppose that the left side set in \eqref{full_app_key} is empty.
Then $(\cT_n-\lambda) \in \C^{2n\times 2n}$ is invertible and $\dist(\lambda,\Spec(\cT_n))>\delta$. Condition \eqref{full_app_hyp} implies $\|(\cT_n-\lambda)^{-1}\|\geq \tilde{\varepsilon}$. But \cite[Lemma~1]{kato}
\[
    \|(\cT_n-\lambda)^{-1}\|\leq \frac{\|\cT_n-\lambda\|^{2n-1}}{|\det(\cT_n-\lambda)|}
    <\frac{\|\cT_n-\lambda\|^{2n-1}}{\delta^{2n}}=\tilde{\varepsilon}^{-1}.
\]
Hence \eqref{full_app_key} follows from the contradiction.

Let $v_n=x_n\oplus \lambda x_n$ so that $\|v_n\|=\sqrt{1+\lambda^2}$. Then
\[
 \frac{\|(\cT_n-\lambda)\underline{v_n}\|_{\C^{2n}}}{\|\underline{v_n}\|_{\C^{2n}}}
 =\frac{\|P_n(A-\lambda)^2x_n\|}{\sqrt{1+\lambda^2}}\leq
 \frac{\|(A-\lambda)\|\ \|(A-\lambda)x_n\|}{\sqrt{1+\lambda^2}}.
\]
Thus \eqref{full_app_key} holds for
\[
    \delta=\frac{\|A-\lambda\|^{\frac 1{2n}}}{(1+\lambda^2)^{\frac 1{4n}}}
    \|\cT_n-\lambda\|^{1-\frac 1{2n}} \varepsilon_n.
\]
The conclusion is a consequence of the identity $\|\cT_n-\lambda\|\leq (1+\|A\|)^2$.
\end{proof}

\subsection{Mapping of second order spectra}

\begin{lemma} \label{l:mt}
For $a,b,c,d\in \R$, let $f(w)=aw+b$, $g(w)=cw+d$ and $F(w)=f(w)g(w)^{-1}$. If $ad\ne cb$ and $g(A)^{-1}\in\mathcal{B}(\cH)$, then
\begin{equation}\label{map}
z\in \Spec_2(A,\cL)    \quad
\iff \quad F(z)\in \Spec_2(F(A),g(A)\cL).
\end{equation}
Moreover, the algebraic and geometric multiplicities of $z$ and $F(z)$ are the same.
\end{lemma}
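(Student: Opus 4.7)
The plan is to derive a key algebraic identity relating $F(A)-F(z)I$ and $A-z$ via the functional calculus, translate the defining relation of $\Spec_2$ through the linear bijection $g(A)\colon\cL\to g(A)\cL$, and finally upgrade the set equivalence in \eqref{map} to equality of multiplicities through the matrix polynomial formalism of Section \ref{basics}. A direct computation, exploiting that all functions of the self-adjoint operator $A$ commute, gives on $\Dom(A)$
\[
F(A)-F(z)I = (ad-bc)(cz+d)^{-1}(A-z)g(A)^{-1},
\]
together with the analogous identity for $\bar z,\overline{F(z)}$ upon noting that reality of $a,b,c,d$ yields $\overline{(c\bar z+d)^{-1}}=(cz+d)^{-1}$.

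Applying these identities to $g(A)u$ and $g(A)v$ with $u,v\in\cL$ produces
\[
\langle (F(A)-F(z))g(A)u,(F(A)-\overline{F(z)})g(A)v\rangle = \frac{(ad-bc)^2}{(cz+d)^2}\,\langle (A-z)u,(A-\bar z)v\rangle.
\]
The scalar prefactor is nonzero: $ad\ne bc$ by hypothesis, and $cz_0+d\ne 0$ at every $z_0\in\Spec_2(A,\cL)$, since real points of $\Spec_2(A,\cL)$ lie in $\Spec(A)$ by \eqref{eugene1} while boundedness of $g(A)^{-1}$ forbids $-d/c\in\Spec(A)$ when $c\ne 0$. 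Combined with the fact that $u\mapsto g(A)u$ is a bijection $\cL\to g(A)\cL$, Definition \ref{sosp} then yields both directions of \eqref{map}.

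For the multiplicity statement, fix a basis $\{b_j\}_{j=1}^n$ of $\cL$ and use $\{g(A)b_j\}_{j=1}^n$ as basis of $g(A)\cL$. Denoting by $Q(z)$ and $\tilde Q(w)$ the quadratic matrix polynomials associated to $(A,\cL)$ and $(F(A),g(A)\cL)$ in these bases, the scalar identity above upgrades to the polynomial identity
\[
(cz+d)^2\,\tilde Q(F(z)) = (ad-bc)^2\,Q(z).
\]
Near $z_0\in\Spec_2(A,\cL)$ one has $cz_0+d\ne 0$, so $(cz+d)^{-2}$ is a nonvanishing holomorphic scalar and $w=F(z)$ is a local biholomorphism onto a neighbourhood of $w_0=F(z_0)$. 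Standard facts from the local Smith form imply that the partial multiplicities of a matrix-valued holomorphic function are preserved by both multiplication by a nonvanishing holomorphic scalar and biholomorphic reparametrization; hence the partial multiplicities of $\tilde Q$ at $w_0$ coincide with those of $Q$ at $z_0$. Via the linearization \eqref{linmatrix} and the theorems of \cite{glr2} cited in the text, these partial multiplicities equal the sizes of the Jordan blocks of $\tilde{\mathcal T}$ at $w_0$ and of $\mathcal T$ at $z_0$, which gives the asserted equality of algebraic and geometric multiplicities. The main subtle step is the invariance of partial multiplicities under the substitution $w=F(z)$; this is routine via the local Smith form but deserves explicit justification, because $\tilde Q(F(z))$ is a priori only a rational matrix function of $z$.
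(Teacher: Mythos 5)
Your proof is correct and takes essentially the same route as the paper: the identity $\langle (F(A)-F(z))g(A)u,(F(A)-\overline{F(z)})g(A)v\rangle=(ad-bc)^2 g(z)^{-2}\langle (A-z)u,(A-\bar z)v\rangle$ is exactly the paper's relation \eqref{same} between the matrix polynomials $Q$ and $\tilde Q$, and the multiplicity statement is obtained in both cases from local analytic equivalence (the paper via \cite[Theorem A.6.6]{glr2}, you via the local Smith form's invariance under a nonvanishing scalar factor and the biholomorphic substitution $w=F(z)$). Your explicit justifications that $g(z)\neq 0$ on $\Spec_2(A,\cL)$ and that the reparametrization preserves partial multiplicities merely spell out steps the paper leaves to a remark and a citation.
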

\begin{proof}
Since $g(A)$ preserves linear independence, a set of vectors $\{b_j\}_{j=1}^n\subset \cL$ is a basis for $\cL$ if and only if the corresponding set of vectors
$\{g(A)b_j\}_{j=1}^n$ is a basis for $g(A)\cL$. Assume that $\{b_j\}$ is a basis for $\cL$ and let $\tilde{b}_j=g(A)b_j$. It is readily seen that $(\tilde{b}_j)^*=g(A)^{-1}b_j^*$. Let $u,v\in \cL$ and $\tilde{u}=g(A)u$ and $\tilde{v}=g(A)v$. Then $\underline{\tilde{u}}=\underline{u}\in\C^n$
for any $u\in \cL$, obtaining the left side from $\{\tilde{b}_j\}$ and the
right side from $\{b_j\}$. Let $B,~L,~M$ be as in \eqref{matrices} for $A$ and $\{b_j\}$. Let $\tilde{B},~\tilde{L},~\tilde{M}$ be as in \eqref{matrices} for $F(A)$ and $\{\tilde{b}_j\}$. Now, let $z\in \Spec_2(A,\cL)$, and note that $g(A)^{-1}\in\mathcal{B}(\cH)$ implies that $g(z)\ne 0$. We have
\begin{align*}
\langle(B - 2zL + z^2M)\underline{u},\underline{v}\rangle &=\langle(A-z)u,(A-\overline{z})v\rangle\\
&=(ad-cb)^{-2}g(z)^2\langle (F(A)-F(z))\tilde{u},(F(A)-F(\overline{z}))\tilde{v}\rangle\\
&=(ad-cb)^{-2}g(z)^2\langle(\tilde{B} - 2F(z)\tilde{L} + F(z)^2\tilde{M})\underline{u},\underline{v}\rangle,
\end{align*}
since $\underline{u},~\underline{v}\in\mathbb{C}^n$ are arbitrary we deduce that
\begin{equation}\label{same}
B - 2zL + z^2M = (ad-cb)^{-2}g(z)^2(\tilde{B} - 2F(z)\tilde{L} + F(z)^2\tilde{M}).
\end{equation}
Thus $F(z)\in \Spec_2(F(A),g(A)\cL)$, and moreover, since the function $(ad-cb)^{-2}g(w)^2$ is analytic and non-zero at $z$, the algebraic and geometric multiplicities of $z$ and $F(z)$ are the same; see \cite[Theorem A.6.6]{glr2}.
\end{proof}


\section{Accumulation of the second order spectrum and multiplicity}
\label{near_real}
\subsection{Neighbourhoods of the discrete spectrum}
Throughout this section we assume that
\begin{equation} \label{hyp1thm4}
  (a,b)\cap\Spec(A) = \{\lambda_1<\dots<\lambda_s\}\subseteq\Spec_{\dis}(A).
\end{equation}
Let
\[
  \chi_j=\chi_{(\lambda_j-\varepsilon,\lambda_j+\varepsilon)}(A)
\]
where $\varepsilon>0$ is small enough to ensure that $m_j:=\operatorname{Rank}(\chi_j)$ is equal to the multiplicity of $\lambda_j$.
Let $m=\sum_{j=1}^s m_j$ be the total-multiplicity of the group of eigenvalues $\lambda_1,\dots,\lambda_s$. We will denote by
$\cE$ the eigenspace associated to the group of eigenvalues in $(a,b)$, that is, $\cE=\operatorname{Range}\chi_{(a,b)}(A)$. We fix an orthonormal basis $\cB=\{u_j\}_{j=1}^m$ of $\cE$, where the $u_j$ are eigenvectors associated to the eigenvalues $\lambda_j$.

\begin{lemma}\label{lem2}
Let $z\in \mathbb{D}(a,b)$. Let
\begin{equation} \label{gamma}
\gamma = 2\sqrt{5}m(b-a)^2\big[(1+\max\{|a|,|b|\})^2+(b-a)^2(2\sqrt{5}s+8)\big].
\end{equation}
If the subspace $\cL\subset \Dom(A^2)$ is such that
\[
\dist_{\Dom(A^2)}(\cB,\cL)<\delta< \alpha_{(a,b)}(z)\big[\dist(z,\Spec\, A)\big]^2\gamma^{-1},
\]
then
\[
      \sigma_{A,\cL}(z)\ge\frac{2}{5|b-a|^2}(\alpha_{(a,b)}(z)\big[\dist(z,\Spec\, A)\big]^2-\delta \gamma)>0.
\]
\end{lemma}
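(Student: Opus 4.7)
The plan is to combine two complementary lower bounds on $\|P(A-z)^2v\|$ valid for any normalised $v\in\cL$, one effective when the spectral projection $\chi v$ (with $\chi:=\chi_{(a,b)}(A)$) is small and the other when it is large, and then to optimise a convex combination. By the hypothesis there exist $\hat u_j\in\cL$ with $\|u_j-\hat u_j\|_2<\delta$; for $\delta$ sufficiently small these span an $m$-dimensional subspace $\tilde\cE\subset\cL$. Set $\eta:=\|\chi v\|\in[0,1]$.

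The first bound will come from testing against $v$ itself: $\|P(A-z)^2v\|\geq|\langle(A-z)^2v,v\rangle|$. Splitting the spectral integral $\int(\mu-z)^2\,d\langle E_\mu v,v\rangle$ into the contributions from $\{\lambda_j\}\subset(a,b)$ and from $\Spec(A)\setminus(a,b)$, applying the argument of Lemma~\ref{lem1} to the outer piece (whose $\Re e^{i(\theta_1-\theta_2)}$-part is then at least $\alpha(z)\|(I-\chi)v\|^2$), and bounding each eigenvalue term trivially via $|\lambda_j-z|\leq b-a$, yields
\[
\|P(A-z)^2v\|\geq\alpha(z)(1-\eta^2)-(b-a)^2\eta^2.
\]

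The second bound will come from testing against $\hat\phi:=\sum_{j=1}^m(\lambda_j-z)^2\langle v,u_j\rangle\hat u_j\in\tilde\cE\subset\cL$, whose exact counterpart is $\phi:=(A-z)^2\chi v\in\cE$. A direct calculation using $(\lambda_j-z)(\lambda_j-\bar z)=|\lambda_j-z|^2$ produces $\langle v,(A-\bar z)^2\phi\rangle=\|(A-z)^2\chi v\|^2\geq d^4\eta^2$, where $d:=\dist(z,\Spec\,A)$. The substitution $u_j\leftrightarrow\hat u_j$ introduces errors controlled by the graph-norm estimate $\|(A-\bar z)^2(\hat u_j-u_j)\|\leq(1+\max\{|a|,|b|\})^2\delta$ (valid because $|z|\leq\max\{|a|,|b|\}$ for $z\in\D(a,b)$) together with Cauchy--Schwarz over the $m$ coefficients $(\lambda_j-z)^2\langle v,u_j\rangle$. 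A similar estimate on $\|\hat\phi\|$ via the near-orthonormality of $\{\hat u_j\}$ yields
\[
\|P(A-z)^2v\|\geq\frac{|\langle(A-z)^2v,\hat\phi\rangle|}{\|\hat\phi\|}\geq d^2\eta-C_1\delta
\]
for a constant $C_1$ depending polynomially on $m$, $s$, $|a|$, $|b|$ and $b-a$.

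Finally, combining by the convex combination $\lambda\cdot(\text{first})+(\text{second})$ and choosing $\lambda=d^2/K$ with $K:=\alpha(z)+(b-a)^2$ equalises the boundary values at $\eta=0$ and $\eta=1$ (the relevant minima, as the $\eta$-dependent part is concave in $\eta$), giving $\|P(A-z)^2v\|\geq(d^2\alpha(z)-KC_1\delta)/(K+d^2)$. Bounding $K+d^2$ by a suitable constant times $(b-a)^2$ (using $\alpha(z)\leq (b-a)^2/4$ and $d\leq(b-a)/2$ for $z\in\D(a,b)$) then produces the stated inequality. The main obstacle will be a careful bookkeeping of all the numerical factors in order to match the precise prefactor $2/5$ and the explicit form of $\gamma$: the $(1+\max\{|a|,|b|\})^2$ and $(b-a)^2$ factors are contributed by the graph-norm estimates, the factors $m$ and $s$ come from the Cauchy--Schwarz applications across the bases $\{u_j\}$ and $\{\hat u_j\}$ (including the need to handle possibly coincident eigenvalues when counting $s$), and the $\sqrt 5$ factors emerge from the convex-combination optimisation once the explicit bounds $\alpha(z)\leq(b-a)^2/4$ and $d\leq(b-a)/2$ are incorporated.
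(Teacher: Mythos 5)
Your route is genuinely different from the paper's. The paper builds an auxiliary operator $\tilde A=A+\sum_{j=1}^s(c-\lambda_j)\chi_j$ which relocates the eigenvalues in $(a,b)$ to a point $c\notin(a,b)$, writes $(A-z)^2=(\tilde A-z)^2+K(z)$ with $K(z)$ of finite rank onto $\cE$, applies Lemma~\ref{lem1} to $\tilde A$, and then compares $Q(z)^{-1}P$ with $(\tilde A-z)^{-2}$ on the range of $K(z)$ using the approximants $v_j\in\cL$; its explicit numbers (the $\sqrt5$'s and the $2/5$) come from fixing $c=2b-a$ or $c=2a-b$, which gives $b-a\le|c-z|\le\tfrac{\sqrt{10}}{2}(b-a)$. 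You avoid the auxiliary operator and the resolvent comparison altogether: you split each unit trial vector by $\eta=\|\chi_{(a,b)}(A)v\|$, prove two lower bounds for $\|P(A-z)^2v\|$ (the rotation/sector argument of Lemma~\ref{lem1} applied only to the spectral part outside $(a,b)$, and a duality test against an approximate copy of $(A-z)^2\chi_{(a,b)}(A)v$ built from the $\hat u_j\in\cL$), and interpolate by a convex combination. The skeleton checks out: the first bound $\alpha(z)(1-\eta^2)-(b-a)^2\eta^2$ is correct; the second does give $d^2\eta-C_1\delta$, because for the exact test vector $\phi=(A-z)^2\chi_{(a,b)}(A)v$ one has $|\langle(A-z)^2v,\phi\rangle|/\|\phi\|=\|(A-z)^2\chi_{(a,b)}(A)v\|\ge d^2\eta$; and with the weight $d^2/K$, $K=\alpha(z)+(b-a)^2$, the concavity argument yields $\sigma_{A,\cL}(z)\ge\bigl(\alpha(z)d^2-KC_1\delta\bigr)/(K+d^2)$ as you state.

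The gap is the final quantitative step, which is the actual content of the lemma, and your bookkeeping plan for it is partly wrong. The bound $\dist(z,\Spec A)\le(b-a)/2$ is false in general (only $\dist(z,\Spec A)\le b-a$ holds, since some $\lambda_j$ lies in $(a,b)$ and $z\in\D(a,b)$); $\alpha(z)\le(b-a)^2/4$ is asserted without proof (the paper only needs $\alpha(z)\le(b-a)^2/2$); and the expectation that the $\sqrt5$'s and the prefactor $2/5$ will ``emerge'' from your optimisation is misplaced, since in the paper they originate in the choice of the shift point $c$ and your argument produces different constants. What saves the approach is that exact matching is not needed: with the crude bounds $\alpha(z)\le(b-a)^2/2$ and $d\le b-a$ one gets $K+d^2\le\tfrac52(b-a)^2$, recovering the prefactor $2/(5(b-a)^2)$, and a careful version of your Cauchy--Schwarz estimates gives $C_1$ of order $\sqrt m\,[(1+\max\{|a|,|b|\})^2+(b-a)^2]$, which is dominated by the stated $\gamma$ (which carries a full factor $m$ and the summand $(b-a)^2(2\sqrt5 s+8)$), so that $KC_1/(K+d^2)\le 2\gamma/(5(b-a)^2)$ and the inequality as stated follows. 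So the lemma is reachable along your route, but as written you have deferred precisely the estimate the lemma asserts (and which Theorem~\ref{thm4} consumes through $\kappa=d^2/\gamma$); when writing it out you must also dispose of the degenerate cases $\eta=0$ and $\hat\phi=0$, where the ratio bound cannot be formed but the claimed inequality is trivial.
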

\begin{proof}
Let $c\in\mathbb{R}\backslash(a,b)$. Define
\[
    \tilde{A}=A+\sum_{j=1}^s(c-\lambda_j)\chi_j \qquad
    \Dom(\tilde{A})=\Dom(A).
\]
Then $\tilde{A}$ is self-adjoint and $\Spec(\tilde{A})=\{c\}\cup \Spec(A)\setminus \{\lambda_1,\ldots,\lambda_s\}$. Let
\[
K(z) = - \sum_{j=1}^s(c-\lambda_j)(\lambda_j + c - 2z)\chi_j,
\]
then $K(z)$ is a finite rank operator with range $\cE$ and
\[
(\tilde{A}-z)^2 =
(A-zI)^2-K(z).
\]
Evidently, $(\tilde{A}-z)^2$ is invertible and for all $v\in\mathcal{H}$ we have
\begin{align*}
\Vert v + (\tilde{A}-z)^{-2}&K(z)v\Vert^2\\ &= \Vert \chi_{\mathbb{R}\backslash(a,b)}v\Vert^2 + \Vert \chi_{(a,b)}v + (\tilde{A}-z)^{-2}\big((A-z)^2 - (\tilde{A}-z)^2\big)\chi_{(a,b)}v\Vert^2\\
&= \Vert \chi_{\mathbb{R}\backslash(a,b)}v\Vert^2 + \Vert (\tilde{A}-z)^{-2}(A-z)^2\chi_{(a,b)}v\Vert^2\\
&= \Vert \chi_{\mathbb{R}\backslash(a,b)}v\Vert^2 + \sum_{i=1}^s\vert\lambda_i - z\vert^4\Vert \chi_iv\Vert^2/\vert c-z\vert^4\\
&\ge \beta_1(z)^4\|v\|^2,
\end{align*}
where $\beta_1(z)=\min\{1,\dist(z,\Spec(A))/|c-z|\}$.

Let $Q(z) = P(\tilde{A}-z)^2\upharpoonright_{\mathcal{L}}$. For each eigenvector $u_j$ we assign a $v_j\in\mathcal{L}$ such that $\Vert A^p(u_j-v_j)\Vert\le\delta$ for $p=0,1,2$. Note that in general $Pu_j\not =v_j$, however $\|Pu_j-v_j\|\leq \delta$. The hypothesis on $\delta$ ensures that
\begin{align*}
\Vert Q(z)v_j-(\tilde{A}-z)^2 u_j\Vert &\le \Vert P(\tilde{A}-z)^2(u_j-v_j)\Vert + \vert c-z\vert^2\Vert Pu_j-u_j\Vert\\
&\le \Vert (\tilde{A}-z)^2(u_j-v_j)\Vert + \vert c-z\vert^2\Vert Pu_j-u_j\Vert\\
&\le \Vert [(A-zI)^2 + \sum_{i=1}^s(c-\lambda_i)(\lambda_i + c - 2z)\chi_i](u_j-v_j)\Vert + \delta\vert c-z\vert^2\\
&\le \delta \beta_2(z),
\end{align*}
where $\beta_2(z)=(1+|z|)^2+s \mu(z)+|c-z|^2$ and $\mu(z)=\max_{1\leq j\leq s}\vert(c-\lambda_j)(\lambda_j + c - 2z)\vert$.

Note that $\tilde{A}$ satisfies the hypothesis of Lemma~\ref{lem1}.
Let $\alpha(z)$ be given by \eqref{alpha}. Then
\begin{align*}
\Vert (\tilde{A}-z)^{-2}u_j&-Q(z)^{-1}Pu_j\Vert \\
&\le\Vert (\tilde{A}-z)^{-2}u_j-Q(z)^{-1}v_j\Vert +\Vert Q(z)^{-1}Pu_j-Q(z)^{-1}v_j\Vert\\
&\le\Vert (c-z)^{-2}u_j-Q(z)^{-1}v_j\Vert + \alpha(z)^{-1}\delta\\
&\le\Vert (c-z)^{-2}v_j-Q(z)^{-1}v_j\Vert + (\alpha(z)^{-1}+\vert c-z\vert^{-2})\delta\\
&\le \alpha(z)^{-1}\Vert (c-z)^{-2}Q(z)v_j-v_j\Vert + (\alpha(z)^{-1}+\vert c-z\vert^{-2})\delta\\
&\le \alpha(z)^{-1}\Vert (c-z)^{-2}Q(z)v_j-u_j\Vert + (2\alpha(z)^{-1}+\vert c-z\vert^{-2})\delta\\
&\le \alpha(z)^{-1}\vert c-z\vert^{-2}\Vert Q(z)v_j-(\tilde{A}-z)^2u_j\Vert + (2\alpha(z)^{-1}+\vert c-z\vert^{-2})\delta \\
&\leq \beta_3(z) \delta
\end{align*}
where $\beta_3(z)=\alpha(z)^{-1}|c-z|^{-2}[\beta_2(z)+2|c-z|^2+\alpha(z)]$.

Thus
\begin{align*}
\Vert[(\tilde{A}-z)^{-2}&-Q(z)^{-1}P]K(z)v\Vert\\
&= \Big\Vert[(\tilde{A}-z)^{-2}-Q(z)^{-1}P]\sum_{j=1}^s(c-\lambda_j)(2z - c- \lambda_j)\chi_jv\Big\Vert\\
&\le\sum_{j=1}^s|c-\lambda_j|\vert 2z - c- \lambda_j\vert\Vert[(\tilde{A}-z)^{-2}-Q(z)^{-1}P]\chi_jv\Vert\\
&\le \mu(z) \|v\|\sum_{k=1}^m\Vert[(\tilde{A}-z)^{-2}-Q(z)^{-1}P]u_k\Vert\\
&\le \delta m \mu(z) \beta_3(z) \Vert v\Vert.
\end{align*}

Hence, for any $v\in\cL$,
\begin{align*}
\Vert P(A&-zI)^2v\Vert = \Vert Q(z)v + PK(z)v\Vert\\
&\ge \|Q(z)^{-1}\|^{-1}\Vert v + Q(z)^{-1}PK(z)v\Vert\\
&\ge \alpha(z)[\Vert v + (\tilde{A}-z)^{-2}K(z)v\Vert-
\Vert (\tilde{A}-z)^{-1}K(z)v-Q(z)^{-1}PK(z)v\Vert]\\
&\ge \alpha(z)[\beta_1(z)^2-\delta m \mu(z) \beta_3(z)] \|v\|.
\end{align*}
By fixing
\[
     c=\left\{ \begin{array}{lll} b+(b-a)=2b-a & \text{if} & \Re z\geq (a+b)/2 \\
     a-(b-a)=2a-b & \text{if} & \Re z \leq (a+b)/2, \end{array}\right.
\]
we get
\[
     b-a\leq |c-z| \leq \frac{\sqrt{10}}{2} (b-a).
\]
Thus
\begin{gather*}
\beta_1(z) = \frac{\dist(z,\Spec(A))}{|c-z|},\qquad \mu(z) \leq 2\sqrt{5}(b-a)^2,\\
\beta_2(z)\leq(1+\max\{|a|,|b|\})^2+(b-a)^2(2s\sqrt{5}+10/4)\qquad\text{and}
\qquad \alpha(z)\le\frac{(b-a)^2}{2}.
\end{gather*}
Therefore
\begin{align*}
\sigma_{A,\cL}(z) &\ge \alpha(z)[\beta_1(z)^2-\delta m \mu(z) \beta_3(z)]\\
&\ge \frac{\alpha(z)[\dist(z,\Spec(A))]^2}{|c-z|^2}-\frac{\delta m 2\sqrt{5}(b-a)^2[\beta_2(z)+2|c-z|^2+\alpha(z)]}{|c-z|^2}\\
&\ge \frac{\alpha(z)[\dist(z,\Spec(A))]^2-\delta \gamma}{|c-z|^2}\\
&\ge \frac{2}{5|b-a|^2}(\alpha(z)[\dist(z,\Spec(A))]^2-\delta \gamma).
\end{align*}
\end{proof}

\begin{lemma}\label{multiplicity}
Let $\lambda\in\Spec_{\dis}(A)$ be an eigenvalue of multiplicity $m$ and let
\[d_\lambda=2^{-1/2}\dist(\lambda,\Spec(A)\backslash\{\lambda\}).\] If $\mathcal{E}\subset\mathcal{L}$, then
\begin{equation}\label{isolation}
\Spec_2(A,\mathcal{L})\cap\mathbb{D}(\lambda-d_\lambda,\lambda+d_\lambda) = \{\lambda\} \qquad
\end{equation}
and $\lambda$ as member of $\Spec_2(A,\cL)$ has geometric multiplicity $m$ and algebraic multiplicity $2m$.
\end{lemma}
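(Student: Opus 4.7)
The plan is to split the statement into the isolation property \eqref{isolation} and the multiplicity count, and to exploit the same structural observation in both. Since $\cE\subset\cL$, we may orthogonally decompose $\cL=\cE\oplus\cM$ with $\cM:=\cL\cap\cE^\perp$, and because $\cE$ is $A$-invariant (being the full eigenspace of $\lambda$), this decomposition block-diagonalises every relevant quadratic object. Set $\tilde A:=A\upharpoonright\cE^\perp$, a self-adjoint operator with $\Spec(\tilde A)=\Spec(A)\setminus\{\lambda\}$; by definition of $d_\lambda$, the open interval $(\lambda-\sqrt 2\,d_\lambda,\lambda+\sqrt 2\,d_\lambda)$ is disjoint from $\Spec(\tilde A)$, and $\D(\lambda-d_\lambda,\lambda+d_\lambda)$ sits strictly inside the corresponding open disc of radius $\sqrt 2\,d_\lambda$.

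For \eqref{isolation}, I would fix $z\in\D(\lambda-d_\lambda,\lambda+d_\lambda)$ with $z\ne\lambda$, suppose by contradiction that some $u=u_e+u_m\in\cL\setminus\{0\}$ (with $u_e\in\cE$, $u_m\in\cM$) satisfies the defining relation $\langle(A-z)u,(A-\overline z)v\rangle=0$ for all $v\in\cL$, and first test this relation against $v\in\cE\subset\cL$. Using $(A-\overline z)v=(\lambda-\overline z)v$ together with $z\ne\lambda$, one deduces $(A-z)u\perp\cE$; since $(A-z)u_m$ lies in $\cE^\perp$ automatically by $A$-invariance, this forces $u_e=0$. Testing next against $v\in\cM$ then shows $z\in\Spec_2(\tilde A,\cM)$, contradicting Lemma~\ref{lem1} applied to $\tilde A$ on the gap interval above.

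For the multiplicity count, I would choose an orthonormal basis $\{b_j\}_{j=1}^n$ of $\cL$ whose first $m$ vectors are eigenvectors of $A$ spanning $\cE$. Then $M=I$, and $A$-invariance of $\cE$ renders both $L$ and $B$ block diagonal with top-left blocks $\lambda I_m$ and $\lambda^2 I_m$ respectively, so that
\[
Q(z)=(z-\lambda)^2 I_m\oplus Q'(z),
\]
where $Q'(z)$ is the analogous quadratic matrix for $\tilde A$ on $\cM$. Testing $Q'(\lambda)\underline w=0$ against $\underline w$ yields $\|(\tilde A-\lambda)w\|^2=0$, and $\lambda\notin\Spec(\tilde A)$ forces $w=0$; hence $\det Q'(\lambda)\ne 0$ and $\det Q(z)=(z-\lambda)^{2m}\det Q'(z)$ has $\lambda$ as a root of exact order $2m$. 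Combining the equivalence $Q(z)\oplus(-I)=E(z)(zI-\mathcal{T})F(z)$ with the direct computations $\det E(z)=(-1)^n\det M$ (Laplace expansion along the bottom block row of $E(z)$) and $\det F(z)=1$ yields $\det(zI-\mathcal{T})=(\det M)^{-1}\det Q(z)$, so $\lambda$ has algebraic multiplicity exactly $2m$ as an eigenvalue of $\mathcal{T}$. The geometric count follows from $\Ker Q(\lambda)=\cE$ (again by the test-against-itself argument) together with the identification $\Ker(\mathcal{T}-\lambda)\cong\Ker Q(\lambda)$ built into the linearisation.

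The only point that requires genuine care is this determinant identity for $\mathcal{T}$: it is crucial that $E(z)$ and $F(z)$ have $z$-\emph{independent} (in fact constant) determinants, so that the order of vanishing of $\det(zI-\mathcal{T})$ at $\lambda$ agrees \emph{exactly} with that of $\det Q(z)$, rather than just up to a shift. Everything else is a routine block-diagonalisation built on the decomposition $\cL=\cE\oplus\cM$ and the reduction to $\tilde A$ on $\cE^\perp$.
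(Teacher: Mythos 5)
Your proof is correct, but it takes a genuinely different route from the paper's at both stages, so it is worth recording the differences. For the isolation statement \eqref{isolation} the paper never passes to a reduced operator: it takes the witness $u$ for $z\in\Spec_2(A,\cL)\cap\D(\lambda-d_\lambda,\lambda+d_\lambda)$, deduces $u\perp\cE$ exactly as you do, but then tests the defining relation with $v=u$ to obtain $\langle (A-\Re z)u,u\rangle=0$ and $\|(A-\Re z)u\|=|\Im z|$, and concludes by comparing $\|(A-\lambda)u\|$ directly with $\dist(\lambda,\Spec(A)\setminus\{\lambda\})$ through the spectral theorem. Your reduction of the relation to $z\in\Spec_2(\tilde{A},\cM)$ with $\tilde{A}=A\upharpoonright\cE^\perp$ and $\cM=\cL\cap\cE^\perp$, followed by Lemma~\ref{lem1} (i.e.\ property \eqref{eugene1}) on the gap $(\lambda-\sqrt{2}\,d_\lambda,\lambda+\sqrt{2}\,d_\lambda)$ of $\tilde{A}$, is structurally cleaner and in fact proves more: the whole open disc of radius $\sqrt{2}\,d_\lambda=\dist(\lambda,\Spec(A)\setminus\{\lambda\})$ meets $\Spec_2(A,\cL)$ only at $\lambda$, so the factor $2^{-1/2}$ in the statement is an artifact of the paper's direct estimate. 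For the multiplicities the paper keeps an arbitrary basis and performs an explicit Jordan-chain analysis of $\cT-\lambda I$: it exhibits the eigenvectors $\binom{\underline{u_k}}{\lambda\underline{u_k}}$, shows that each carries a chain of length exactly two ending at $\binom{\underline{0}}{\underline{u_k}}$, and identifies the generalized eigenspace as the span of these $2m$ vectors; you instead fix an adapted orthonormal basis (legitimate by the basis-independence observation following Definition~\ref{multip}), block-diagonalise $Q(z)=(z-\lambda)^2I_m\oplus Q'(z)$ with $Q'(\lambda)$ invertible, and read the algebraic multiplicity off $\det(zI-\cT)=(\det M)^{-1}\det Q(z)$, the point you rightly stress being that $\det E(z)$ and $\det F(z)$ are nonzero constants. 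Your determinant argument is shorter and makes the exact order of vanishing at $\lambda$ transparent, while the paper's chain computation yields the finer (unclaimed) information that all partial multiplicities of $\lambda$ equal two; both proofs ultimately rest on the same two facts, namely that $\cE$ reduces $A$ and that $\lambda$ is isolated in $\Spec(A)$ with eigenspace exactly $\cE$.
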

\begin{proof}
Assume that $z\in\Spec_2(A,\mathcal{L})\cap\mathbb{D}(\lambda-d_\lambda,\lambda+d_\lambda)$ and $z\ne\lambda$. Then there exists a normalised $u\in\mathcal{L}\backslash\{0\}$ such that
\begin{equation} \label{evector}
\langle(A-zI)u,(A-\overline{z}I)w\rangle = 0
\end{equation}
for all $w\in \cL$. In particular $\langle u,u_k\rangle = 0$ for each $u_k\in\cE$.
If we take $w=u$ in \eqref{evector}, we achieve
\[
    \|(A-\Re z)u\|^2-|\Im z|^2 \|u\|^2-2 i |\Im z| \langle (A-\Re z) u,u\rangle=0.
\]
Thus $|\Im z|=\|(A-\Re z)u\|\not=0$ and $\langle (A-\Re z)u,u\rangle =0$, so that
\begin{align*}
    \|(A-\lambda) u\|^2&=|\lambda-\Re z|^2\|u\|^2+
    \|(A-\Re z)u\|^2+2 (\lambda-\Re z)\langle (A-\Re z)u,u\rangle \\
    &   = |\lambda - \Re z|^2+|\Im z|^2\leq 2 |\Im z|^2.
\end{align*}
Since
\[
\dist(\lambda,\Spec(A)\setminus \{\lambda\})\leq \frac{\|(A-\lambda) (I-
\chi_{(\lambda-d_\lambda,\lambda+d_\lambda)})u\|}{\|(I-
\chi_{(\lambda-d_\lambda,\lambda+d_\lambda)})u\|}=\|(A-\lambda)u\|,
\]
we get
\[
     \|u\|=\|(I-
\chi_{(\lambda-\epsilon,\lambda+\epsilon)})u\|\leq
\frac{\sqrt{2}|\Im z|}{\dist(\lambda,\Spec(A)\setminus \{\lambda\})}<1.
\]
Statement \eqref{isolation} follows from the contradiction.

Let $\{b_1,\dots,b_n\}$ be a basis for $\mathcal{L}$ and let $B$, $L$ and $M$ be the matrices \eqref{matrices} associated to this basis. Let $v$ be an arbitrary member of $\mathcal{L}$. We have
\[
0 = \langle (A - \lambda I)u_k,(A - \lambda I)v)\rangle_{\mathcal{H}}= \langle(B - 2\lambda L + \lambda^2M)\underline{u_k},\underline{v}\rangle_{\mathbb{C}^n}
\]
so that $(B - 2\lambda L + \lambda^2M)\underline{u_k} = 0$. Hence
\begin{displaymath}
\Big[T - \lambda S\Big]\left(
\begin{array}{c}
\underline{u_k}\\
\lambda\underline{u_k}
\end{array} \right) = \Bigg[\left(
\begin{array}{cc}
0 & I\\
-B & 2L
\end{array} \right)-\lambda \left(
\begin{array}{cc}
I & 0\\
0 & M
\end{array} \right)\Bigg] \left(
\begin{array}{c}
\underline{u_k}\\
\lambda\underline{u_k}
\end{array} \right) = 0.
\end{displaymath}
Since $\det(S)\not=0$, $\left\{\binom{\underline{u_1}}{\lambda\underline{u_1}},\dots,\binom{\underline{u_m}}{\lambda\underline{u_m}}\right\}$ are eigenvectors of $\cT$ corresponding to the eigenvalue $\lambda$. For $k\ne j$ we have $\langle M\underline{u_k},\underline{u_j}\rangle_{\mathbb{C}^n} = \langle u_k,u_j\rangle_{\mathcal{H}} = 0$, so the vectors $\{\underline{u_1},\dots,\underline{u_m}\}$ are linearly independent. There is a one to one correspondence between the eigenvectors of $\cT$ associated to $\lambda$ and the eigenvectors of $A$ associated to $\lambda$. Thus, the geometric multiplicity of $\lambda\in\Spec_2(A,\cL)$ is equal to $m$.

Now let us compute the algebraic multiplicity. Let $\underline{v},\underline{w}\in\mathbb{C}^n$ and $\underline{u}\in\operatorname{Span}\{\underline{u_k}\}_{k=1}^m$. If
\begin{align*}
\left(
\begin{array}{c}
\underline{u}\\
\lambda\underline{u}
\end{array} \right) &= [\cT - \lambda I]\left(
\begin{array}{c}
\underline{v}\\
\underline{w}
\end{array} \right)\\
&= \Bigg[\left(
\begin{array}{cc}
0 & I\\
-M^{-1}B & 2M^{-1}L
\end{array} \right)-\lambda \left(
\begin{array}{cc}
I & 0\\
0 & I
\end{array} \right)\Bigg] \left(
\begin{array}{c}
\underline{v}\\
\underline{w}
\end{array} \right),
\end{align*}
we have $\underline{w} = \lambda\underline{v} + \underline{u}$ and $-M^{-1}B\underline{v} + 2M^{-1}L\underline{w} - \lambda\underline{w} = \lambda\underline{u}$. Therefore
\begin{displaymath}
-B\underline{v} + 2\lambda L\underline{v} - \lambda^2M\underline{v} = 2\lambda M\underline{u} - 2L\underline{u} = 0
\end{displaymath}
so we deduce that
\begin{displaymath}
\left(
\begin{array}{c}
\underline{v}\\
\underline{w}
\end{array} \right) = \left(
\begin{array}{c}
\underline{v}\\
\lambda\underline{v}
\end{array} \right) + \left(
\begin{array}{c}
\underline{0}\\
\underline{u}
\end{array} \right)\quad\textrm{where}\quad\underline{v}\in
\operatorname{Span}\{\underline{u_k}\}_{k=1}^m.
\end{displaymath}
Suppose now that
\begin{align*}
\left(
\begin{array}{c}
\underline{0}\\
\underline{u}
\end{array} \right) = [\cT - \lambda I]\left(
\begin{array}{c}
\underline{v}\\
\underline{w}
\end{array} \right).
\end{align*}
We have $\underline{w} = \lambda\underline{v}$ and $-M^{-1}B\underline{v} + 2M^{-1}L\underline{w} - \lambda\underline{w} = \underline{u}$, therefore $-B\underline{v} + 2\lambda L\underline{v} - \lambda^2M\underline{v} = M\underline{u}$. Thus
\begin{displaymath}
\langle M\underline{u},\underline{u}\rangle = -\langle B\underline{v} - 2\lambda L\underline{v} + \lambda^2M\underline{v},\underline{u}\rangle = -\langle \underline{v},B\underline{u} - 2\lambda L\underline{u} + \lambda^2M\underline{u}\rangle = 0
\end{displaymath}
so that $\underline{u} = \underline{0}$. This ensures that spectral subspace associated with $\lambda$ as eigenvalue of $\cT$ is given by
\begin{displaymath}
\operatorname{Span}\left\{
\left(\begin{array}{c}
\underline{u_1}\\
\lambda\underline{u_1}
\end{array} \right),
\left(\begin{array}{c}
\underline{0}\\
\underline{u_1}
\end{array} \right),\dots,
\left(\begin{array}{c}
\underline{u_m}\\
\lambda\underline{u_m}
\end{array} \right),
\left(\begin{array}{c}
\underline{0}\\
\underline{u_m}
\end{array} \right)\right\}.
\end{displaymath}
\end{proof}

\begin{lemma}\label{aux}
Let $\mathcal{K}$ be a Hilbert space and $\mathcal{A} = \{u_1,\dots,u_m\}\subset\mathcal{K}$ be a set of orthogonal vectors with $\Vert u_j\Vert_\cK\ge 1$ for $1\le j\le m$. Let $\mathcal{L}$ be an $n$-dimensional subspace of $\mathcal{K}$ where $n\ge m$. Let $w_j=\tilde{P}u_j$ where $\tilde{P}:\cK\longrightarrow \cL$
is the orthogonal projection associated to $\cL$.  Let $w_j(t) = tw_j + (1 - t)u_j$
for $t\in[0,1]$. If $\Vert w_j-u_j\Vert< 1/\sqrt{m}$ for each $1\le j\le m$, then there exist vectors $\{w_{m+1},\dots,w_n\}$ such that $\{w_1(t),\dots,w_m(t),w_{m+1},\dots,w_n\}$ is linearly independent for all $t\in[0,1]$.
\end{lemma}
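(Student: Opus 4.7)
The plan is to exploit the identity $\tilde{P}w_j(t)=w_j$, valid for every $t\in[0,1]$: indeed $w_j\in\mathcal{L}$ yields $\tilde{P}w_j=w_j$, and by definition $\tilde{P}u_j=w_j$, so $\tilde{P}w_j(t)=tw_j+(1-t)w_j=w_j$. In other words, the orthogonal projection onto $\mathcal{L}$ collapses the entire homotopy $t\mapsto w_j(t)$ to the constant path $w_j$. Hence any linear dependence among $\{w_1(t),\dots,w_m(t),w_{m+1},\dots,w_n\}$ will project to a linear dependence among $\{w_1,\dots,w_m,w_{m+1},\dots,w_n\}$ inside $\mathcal{L}$, and if the latter set is arranged to be a basis of $\mathcal{L}$ the conclusion is immediate. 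With this observation the homotopy parameter can be removed entirely, so no continuity or degree argument is needed.

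To execute this I would first verify that $w_1,\dots,w_m$ are themselves linearly independent in $\mathcal{L}$. Suppose $\sum_{j=1}^m c_jw_j=0$. Then $\sum_j c_ju_j=\sum_j c_j(u_j-w_j)$, so using $\Vert u_j-w_j\Vert<1/\sqrt{m}$ together with the Cauchy--Schwarz estimate $\sum_j|c_j|\le\sqrt{m}\,(\sum_j|c_j|^2)^{1/2}$ gives
\[
  \Bigl\Vert\sum_j c_ju_j\Bigr\Vert\le\sum_j|c_j|\,\Vert u_j-w_j\Vert<\Bigl(\sum_j|c_j|^2\Bigr)^{1/2}.
\]
On the other hand, orthogonality of the $u_j$ combined with $\Vert u_j\Vert\ge 1$ produces $\Vert\sum_j c_ju_j\Vert^2=\sum_j|c_j|^2\Vert u_j\Vert^2\ge\sum_j|c_j|^2$. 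These two estimates are incompatible unless $c_j=0$ for every $j$.

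With this in hand, extend $\{w_1,\dots,w_m\}$ to a basis $\{w_1,\dots,w_n\}$ of $\mathcal{L}$, which is possible since $\dim\mathcal{L}=n\ge m$. If $\sum_{j=1}^m c_jw_j(t)+\sum_{k=m+1}^n c_kw_k=0$ for some $t\in[0,1]$, applying $\tilde{P}$ and using $\tilde{P}w_j(t)=w_j$ together with $w_k\in\mathcal{L}$ yields
\[
  \sum_{j=1}^m c_jw_j+\sum_{k=m+1}^n c_kw_k=0,
\]
so that all coefficients vanish by the basis property. The only delicate point in the whole argument is the initial linear independence step, where the quantitative threshold $1/\sqrt{m}$ is consumed tightly by Cauchy--Schwarz; once that is secured, the projection identity disposes of the parameter $t$ uniformly.
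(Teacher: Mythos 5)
Your proof is correct and takes essentially the same route as the paper: the identical Cauchy--Schwarz argument with the threshold $1/\sqrt{m}$ gives independence of $w_1,\dots,w_m$, followed by completion to a basis of $\mathcal{L}$. Your final step of applying $\tilde{P}$ (via $\tilde{P}w_j(t)=w_j$) is just a repackaging of the paper's decomposition of the dependence relation into its $\mathcal{L}$ and $\mathcal{L}^\perp$ components, retaining only the $\mathcal{L}$ part.
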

\begin{proof}
If $a_1w_1 + \dots + a_mw_m = 0$ where not all of the $a_j=0$, we would have
\begin{align*}
\sum_{j=1}^m\vert a_j\vert^2 &= \Vert a_1(u_1 - w_1) + \dots + a_m(u_m - w_m)\Vert^2\\
&\le \bigg(\sum_{j=1}^m\vert a_j\vert\Vert u_j - w_j\Vert\bigg)^2 < \bigg(\sum_{j=1}^m\vert a_j\vert\bigg)^2/m
\end{align*}
which contradicts H\"older inequality. Hence necessarily $\{w_1,\ldots,w_m\}$
is linearly independent.
Let $\{w_{m+1},\dots,w_n\}$ be any completion of $\{w_1,\dots,w_m\}$ to a basis of $\mathcal{L}$. Let $t\in[0,1]$. Suppose now that $a_1w_1(t) + \dots + a_mw_m(t) + a_{m+1}w_{m+1} + \dots + a_nw_n = 0$. Then
\begin{align*}
0 &= \sum_{j=1}^m a_jw_j(t) + \sum_{j=m+1}^na_jw_j = \sum_{j=1}^na_jw_j + (I - \tilde{P})\sum_{j=1}^{m}(1-t)a_ju_j.
\end{align*}
The two terms on the right-hand side are orthogonal and therefore each must vanish. As the set $\{w_1,\dots,w_n\}$ is linearly independent, all $a_j=0$ ensuring the conclusion of the lemma.
\end{proof}


\subsection{Main result}

\begin{theorem}\label{thm4}
Let $a,b\not\in \Spec(A)$ be such that condition \eqref{hyp1thm4} hold for a group of eigenvalues $\{\lambda_1<\ldots<\lambda_s\}$ with corresponding multiplicities $m_j$. Let $m=\sum_{j=1}^sm_j$ be their total multiplicity. Let
\[
    d=\dist(\{a,b\},\Spec(A)\setminus \{\lambda_j\}_{j=1}^s).
\]
Let $\kappa =d^2/\gamma$ where $\gamma$ is defined by \eqref{gamma}.
Let
\begin{displaymath}
0<\varepsilon < \min \left\{\frac{1}{m^{1/4}\kappa^{1/2}},\min_{0\le j\le s}\frac{|\lambda_j-\lambda_{j+1}|}{2}\right\}
\qquad \text{where} \quad \lambda_0=a\ \text{and}\ \lambda_{s+1}=b.
\end{displaymath}
If $\cL\subset \Dom(A^2)$ is such that
\[
\dist_{\Dom(A^2)}(\cB,\cL)<\kappa \varepsilon ^2
\]
for an orthonormal set of eigenfunctions $\cB$ associated to $\{\lambda_j\}_{j=1}^s$, then
\begin{equation}\label{ths1thm4}
     \Spec_2(A,\cL) \cap \D(a,b) \subset \bigcup_{j=1}^s
     \D(\lambda_j-\varepsilon,\lambda_j+\varepsilon)
\end{equation}
and the total algebraic multiplicity of the points in $\Spec_2(A,\cL)\cap \D(\lambda_j-\varepsilon,\lambda_j+\varepsilon)$
is $2m_j$.
\end{theorem}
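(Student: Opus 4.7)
The plan is two-phase: first establish the set inclusion \eqref{ths1thm4} via Lemma~\ref{lem2}, then deduce the multiplicity count by running a linear homotopy from a subspace containing $\cE$ down to $\cL$. For the inclusion step, I would fix $z$ in the shell $\D(a,b)\setminus\bigcup_j\D(\lambda_j-\varepsilon,\lambda_j+\varepsilon)$ and check that the hypothesis $\dist_{\Dom(A^2)}(\cB,\cL) < \kappa\varepsilon^2$, together with the elaborate form of $\gamma$ in \eqref{gamma}, fits the admissibility condition of Lemma~\ref{lem2}. With $\delta\gamma = d^2\varepsilon^2$ the required inequality reduces to $d^2\varepsilon^2 < \alpha_{(a,b)}(z)\dist(z,\Spec A)^2$, which must be verified from the geometric facts that $\dist(z,\Spec A)\ge \min(\varepsilon,d)$ on the shell (the rest of $\Spec(A)$ lies at distance at least $d$ from every $z\in\overline{\D(a,b)}$) and that $\alpha_{(a,b)}(z)$ is controlled below by the position of $z$ relative to $\{a,b\}$. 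Lemma~\ref{lem2} then yields $\sigma_{A,\cL}(z) > 0$ and hence $z\notin\Spec_2(A,\cL)$, so the inclusion \eqref{ths1thm4} follows.

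For the homotopy, I would equip $\Dom(A^2)$ with its graph inner product and denote by $\tilde P:\Dom(A^2)\to\cL$ the associated orthogonal projection. Put $w_j := \tilde P u_j$ for $1\le j\le m$; the hypothesis combined with the constraint $\varepsilon < 1/(m^{1/4}\kappa^{1/2})$ yields $\|u_j-w_j\|_2 < \kappa\varepsilon^2 < 1/\sqrt m$, so Lemma~\ref{aux} (applied with $\cK = \Dom(A^2)$) produces a completion $\{w_{m+1},\ldots,w_n\}$ of $\cL$ such that, setting $w_j(t) := tw_j + (1-t)u_j$ for $1\le j\le m$ and $w_j(t) := w_j$ for $m<j\le n$, the subspace
\[
  \cL_t := \Span\{w_1(t),\ldots,w_n(t)\}
\]
has dimension $n$ for every $t\in[0,1]$, with $\cL_0\supset\cE$ and $\cL_1=\cL$. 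Since $\|u_j-w_j(t)\|_2 = t\|u_j-w_j\|_2 < \kappa\varepsilon^2$, each $\cL_t$ satisfies the standing hypothesis and inherits the inclusion just established.

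Finally, I would assemble the linearisation $\cT(t)$ of \eqref{linmatrix} in the basis $\{w_j(t)\}$ of $\cL_t$; the $w_j(t)$ depend continuously on $t$, $M(t)$ remains invertible throughout, and thus $\cT(t)$ depends continuously on $t$. By the previous step the closed curve $\Gamma_j := \partial\D(\lambda_j-\varepsilon,\lambda_j+\varepsilon)$, which lies inside $\D(a,b)$ by the second constraint on $\varepsilon$, avoids $\Spec(\cT(t)) = \Spec_2(A,\cL_t)$ for every $t\in[0,1]$. Hence the Riesz projection
\[
  \Pi_j(t) := \frac{1}{2\pi i}\oint_{\Gamma_j}(zI-\cT(t))^{-1}\,dz
\]
varies continuously in $t$ with integer-valued trace, so $\tr\Pi_j(t)$ is constant on $[0,1]$. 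At $t=0$ the containment $\cE\subset\cL_0$ together with the bound $d_{\lambda_j}\ge\sqrt{2}\,\varepsilon>\varepsilon$ (implied by the second constraint on $\varepsilon$) lets Lemma~\ref{multiplicity} apply to each $\lambda_j$ separately and give $\tr\Pi_j(0) = 2m_j$, which equals $\tr\Pi_j(1)$---the desired statement. The main obstacle is the first step: the uniform verification of $d^2\varepsilon^2 < \alpha_{(a,b)}(z)\dist(z,\Spec A)^2$ on the shell, for which the intricate form of $\gamma$ in \eqref{gamma} has been designed; once this is secured the homotopy/continuity arguments that follow are routine.
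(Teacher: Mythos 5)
Your second phase is essentially the paper's own argument: Lemma~\ref{aux} applied in $\cK=\Dom(A^2)$, the family $\cL_t$ inheriting the distance hypothesis because $\|u_j-w_j(t)\|_{\Dom(A^2)}\le\|u_j-w_j\|_{\Dom(A^2)}$, Lemma~\ref{multiplicity} at the end of the homotopy where $\cE\subset\cL_0$, and constancy of the rank of the Riesz projections of $\cT(t)$ along the homotopy (you argue via the integer-valued trace, the paper via Kato's lemma on pairs of projections; the difference is immaterial).

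The genuine gap is in your first phase, and it is not a verification that can be ``secured'': as stated it fails. You propose to apply Lemma~\ref{lem2} with the interval $(a,b)$ itself and to check $d^2\varepsilon^2<\alpha_{(a,b)}(z)\,\dist(z,\Spec A)^2$ uniformly on the shell $\D(a,b)\setminus\bigcup_j\D(\lambda_j-\varepsilon,\lambda_j+\varepsilon)$. But in \eqref{alpha} the factor $(b-a)^2-|z-a|^2-|z-b|^2$ vanishes identically on the circle $\partial\D(a,b)$ (there the segment $[a,b]$ subtends a right angle), so $\alpha_{(a,b)}(z)\to 0$ as $z$ approaches $\partial\D(a,b)$ --- for instance near the top of the circle, where $\dist(z,\Spec A)$ remains bounded away from $0$. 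Hence your key inequality is violated on part of the shell for every fixed $\varepsilon>0$, however small $\dist_{\Dom(A^2)}(\cB,\cL)$ is, and the constant $\gamma$ of \eqref{gamma} cannot repair this: $\gamma$ does not involve $d$ at all, so it is not the source of the factor $d^2$ in $\kappa$. The missing idea is the paper's enlargement of the interval. Since $d=\dist(\{a,b\},\Spec(A)\setminus\{\lambda_j\}_{j=1}^s)$, the interval $(\tilde a,\tilde b)=(a-d,b+d)$ still satisfies \eqref{hyp1thm4} with the same group $\{\lambda_1,\dots,\lambda_s\}$, and the discs $\D(a,b)\subset\D(\tilde a,\tilde b)$ are concentric, so the boundary degeneration disappears: an explicit minimisation over $\partial\D(a,b)$ gives $\inf_{z\in\D(a,b)}\alpha_{(\tilde a,\tilde b)}(z)=d^2$, which is precisely where the $d^2$ in $\kappa=d^2/\gamma$ comes from. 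Lemma~\ref{lem2}, applied on $(\tilde a,\tilde b)$ with $\delta=\kappa\varepsilon^2$, then yields $\sigma_{A,\cL}(z)>0$ on the shell and hence \eqref{ths1thm4}. Note finally that your homotopy step needs \eqref{ths1thm4} for every $\cL_t$, so the gap in phase one propagates into phase two as well.
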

\begin{proof}
Let $\tilde{a}=a-d$ and $\tilde{b}=b+d$, then
\[
   \inf_{z\in \D(a,b)} \alpha_{(\tilde{a},\tilde{b})}(z) =
   \min_{\theta\in(-\pi,\pi]} \left\{\alpha_{(\tilde{a},\tilde{b})}(z):z=\frac{b-a}{2}e^{i\theta}+
   \frac{a+b}{2}\right\}
\]
To find the right hand side we may assume without loss of generality that $a = -r$ and $b = r$, then for $z = x+iy$ where $x^2+y^2=r$, we have
\[
\alpha_{(\tilde{a},\tilde{b})}(z) =\frac{(2r + 2d)^2-|x+iy+r+d|^2-|x+iy-r-d|^2}{2|x+iy+r+d||x+iy-r-d|}\dist(x+iy,\{-r-d,r+d\})^2.
\]
We assume that $x\ge 0$ as the case where $x< 0$ can be treated analogously. A straightforward calculation shows that
\[
\min_{\scriptsize \begin{array}{c}0\!\leq\! x\! \leq\! r \\ y^2\!+\!x^2\!=r \end{array}}\alpha_{(\tilde{a},\tilde{b})}(x+iy)=d(d+2r)\min_{\scriptsize \begin{array}{c}0\!\leq\! x\! \leq\! r \\ y^2\!+\!x^2\!=r \end{array}}\frac{|x+iy-r-d|}{|x+iy+r+d|} = d^2,
\]
hence Lemma~\ref{lem2} applied to the interval $(\tilde{a},\tilde{b})$
ensures \eqref{ths1thm4}.

Let us prove the second part. In the notation of Lemma~\ref{aux}, let $\mathcal{A}=\cB$, $\mathcal{K}=\Dom(A^2)$ and define subspaces
$\mathcal{L}_t = \Span\{w_1(t),\dots,w_m(t),w_{m+1},\dots,w_n\}$ for $t\in [0,1]$.
Note that $\cB$ is also an orthogonal set in $\Dom(A^2)$ and $\|u_j\|_{\Dom(A^2)}\geq 1$.
Since $\kappa\varepsilon^2<m^{-1/2}$, we get $\dim \cL_t=n$ for all $t\in[0,1]$. Now,
\begin{align*}
\Vert u_j - w_j(t)\Vert_{\Dom(A^2)} &= t\|u_j-w_j\|_{\Dom(A^2)}=t\|(1-\tilde{P})u_j\|_{\Dom(A^2)}\\
&\le \dist_{\Dom(A^2)}(u_j,\mathcal{L}) \quad\textrm{for all}\quad t\in[0,1],
\end{align*}
thus $\dist_{\Dom(A^2)}(\cB,\cL_t)<\kappa \varepsilon ^2$, and by virtue of Lemma~\ref{lem2} we obtain
\begin{displaymath}
\Spec_2(A,\cL_t) \cap \D(a,b) \subset \bigcup_{j=1}^s\D(\lambda_j-\varepsilon,\lambda_j+\varepsilon)\quad\textrm{for all}\quad t\in[0,1].
\end{displaymath}
Denote by $\cT(t)$ the linearisation matrix defined
as in \eqref{linmatrix} for $\cL_t$. According to Lemma~\ref{multiplicity}, the algebraic multiplicities of $\lambda_j\in\Spec\, \cT(1)$ is $2m_j$.
If we let  $\pi_j(t)$ be the spectral projection associated to
\[
    \Spec\, \cT(t)\cap \D(\lambda_j-\varepsilon,\lambda_j+\varepsilon),
\]
then
\[
\pi_j(t_1) - \pi_j(t_2)= -\frac{1}{2i\pi}\oint_{\vert\lambda_j-z\vert=\varepsilon}(\cT(t_1)-\zeta)^{-1} - (\cT(t_2)-\zeta )^{-1}\operatorname{d}\zeta.
\]
Thus $\pi_j(t)\to \pi_j(1)$ as $t\to 1$. By virtue of \cite[Lemma 4.10, Section 1.4.6]{katopert}, we have $\rank(\pi_j(0)) = \rank(\pi_j(1))$.
\end{proof}

An immediate consequence of Theorem~\ref{thm4} is that if a sequence of test subspaces
$\cL_n$ approximates a normalised eigenfunction $u$ associated to a simple eigenvalue $\lambda\in \Spec(A)$ at a given rate in the graph norm of $A^2$,
\[
      \|u-u_n\|_{\Dom(A^2)}=\delta(n)\to 0, \qquad u_n\in \cL_n,
\]
then there exist a conjugate pair $z_n,\overline{z_n}\in \Spec_2(A,\cL_n)$
such that $|\lambda-z_n|=O(\delta^{1/2})$. A simple example shows that this estimate is not necessarily optimal.

\begin{example} \label{nonoptimal} Let $\cH=\Span\{e_n\}_{n=0}^\infty$,
$
    A=\sum n|e_n^+\rangle \langle e_n^+|
$
and \[\cL_n=\Span\{e_1,\ldots,e_{n-1},\alpha_n e_0+\beta_n e_n\}\]
where $\alpha_n^2+\beta_n^2=1$ and $\beta_n\to 0$. Then
\[
    \Spec_2(A,\cL_n)=\{1,\ldots,n-1,\gamma_n,\overline{\gamma_n}\}
\]
where $\gamma_n\sim i n \beta_n$. On the other hand
\[
   \|A^p((\alpha_n-1)e_0+\beta_n e_n)\|\sim n^p \beta_n \qquad p=0,1,2.
\]
Thus $|\gamma_n|\sim n\beta_n$ while $\dist_{\Dom(A^2)}(\cB,\cL_n)\sim n^2\beta_n$.
Moreover, note that in general $\dist_{\Dom(A^2)}(\cB,\cL_n)$ might diverge as $n\to \infty$
and still we might be able to recover  $|\gamma_n|\to 0$; take for instance $\beta_n=1/n^{3/2}$.
\end{example}

An application of  Lemma~\ref{l:mt} yields convergence to eigenvalues
under the weaker assumption $\cL_n\in\Lambda_1$.

\begin{corollary}\label{thm1}
Let $-\infty\leq a<b\leq \infty$ be such that $(a,b)\cap\Spec_{\ess}(A) = \emptyset$. If $(\mathcal{L}_n)\in\Lambda_1$, then
\begin{equation}\label{limi}
\lim_{n\to\infty}\Spec_2(A,\mathcal{L}_n)\cap \mathbb{D}(a,b) = \Spec_{\dis}(A)\cap (a,b).
\end{equation}
\end{corollary}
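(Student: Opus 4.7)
The plan is to reduce to the bounded-operator setting, where Theorem~\ref{thm4} applies directly, via the mapping Lemma~\ref{l:mt}. The hypothesis $(a,b)\cap\Spec_\ess(A)=\varnothing$ rules out $\Spec_\ess(A)=\R$, so $\rho(A)\cap\R\ne\varnothing$. For any $c\in\rho(A)\cap\R$, set $F(w)=(w-c)^{-1}$ and $g(w)=w-c$: Lemma~\ref{l:mt} identifies $z\in\Spec_2(A,\cL_n)$ with $F(z)\in\Spec_2(F(A),(A-c)\cL_n)$, preserving algebraic and geometric multiplicities. Since $F(A)=(A-c)^{-1}$ is bounded, $\Lambda_0=\Lambda_p$ for every $p$; and since $(A-c)\colon(\Dom(A),\|\cdot\|_1)\to\cH$ is bounded and surjective, $(\cL_n)\in\Lambda_1$ for $A$ delivers $((A-c)\cL_n)\in\Lambda_0$ for $F(A)$. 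Every eigenvector $u_j$ of $A$ for an eigenvalue $\lambda_j\ne c$ is also an eigenvector of $F(A)$, and the same surjection gives the rate $\dist_\cH(u_j,(A-c)\cL_n)\le(1+|c|)|\lambda_j-c|^{-1}\dist_{\Dom(A)}(u_j,\cL_n)\to 0$.

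\textbf{Local concentration.} Fix any $[a',b']\subset(a,b)$ with $a',b'\in\rho(A)$. Because $[a',b']$ has positive distance from $\{a,b\}$ and from $\Spec_\ess(A)$, the set $(a',b')\cap\Spec(A)=\{\lambda_1<\dots<\lambda_s\}$ is finite, contained in $\Spec_\dis(A)$, with multiplicities $m_j$. Choose $c\in\rho(A)\cap(-\infty,a')$ (non-empty, since $\rho(A)$ is open and contains $a'$); then $F$ is a strictly decreasing bijection from $[a',b']$ to $[F(b'),F(a')]$ and $F(\D(a',b'))=\D(F(b'),F(a'))$. Applying Theorem~\ref{thm4} to the bounded operator $F(A)$ on the interval $(F(b'),F(a'))$ with test subspaces $(A-c)\cL_n$, and translating the conclusion back through $F^{-1}$ via Lemma~\ref{l:mt}, one obtains: for every $\eps>0$ and every $n$ sufficiently large,
\begin{equation}\label{loc_conc}
\Spec_2(A,\cL_n)\cap\D(a',b')\subset\bigcup_{j=1}^s\D(\lambda_j-\eps,\lambda_j+\eps),
\end{equation}
with total algebraic multiplicity exactly $2m_j$ in the $j$th disc.

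\textbf{Both inclusions, and the obstacle.} For $\supseteq$ in \eqref{limi}, given $\mu\in\Spec_\dis(A)\cap(a,b)$, pick $[a',b']\subset(a,b)$ with $a',b'\in\rho(A)$ and $(a',b')\cap\Spec(A)=\{\mu\}$; the multiplicity $2m_\mu>0$ in \eqref{loc_conc} forces a point of $\Spec_2(A,\cL_n)$ into $\D(\mu-\eps,\mu+\eps)$ for every $\eps>0$ and all $n$ large, whence $\mu\in\lim_n\Spec_2(A,\cL_n)$. For $\subseteq$, suppose $z_n\to z\in\D(a,b)$ with $z_n\in\Spec_2(A,\cL_n)$ and $z\notin\Spec_\dis(A)\cap(a,b)$. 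Since $z$ is interior to $\D(a,b)$ and $\rho(A)\cap(a,b)$ is dense (as $\Spec(A)\cap(a,b)$ is countable), one may select $[a',b']\subset(a,b)$ with $a',b'\in\rho(A)$ and $z\in\D(a',b')$; either $\Im z\ne 0$ or $z\in\rho(A)\cap\R$, so $\eta:=\dist(z,\{\lambda_j\}_{j=1}^s)>0$, and \eqref{loc_conc} with $\eps<\eta/2$ contradicts $z_n\to z$. The main obstacle is administrative bookkeeping: verifying that the $\Dom(A^2)$-density demanded by Theorem~\ref{thm4} is genuinely supplied, after the change of variable, by the $\cH$-density of $(A-c)\cL_n$ for $F(A)$, and that the Möbius pull-back by $F^{-1}$ of small discs about $F(\lambda_j)$ sits inside discs $\D(\lambda_j-\eps,\lambda_j+\eps)$ up to a harmless shrinking of $\eps$.
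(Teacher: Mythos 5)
Your proposal is correct and follows essentially the same route as the paper: both reduce to the bounded operator $(A-c)^{-1}$ via the mapping Lemma~\ref{l:mt} with a real resolvent (shift) point, verify that the transformed subspaces $(A-c)\cL_n$ lie in $\Lambda_0$ (which suffices since $\Lambda_0=\Lambda_2$ for bounded operators), apply Theorem~\ref{thm4}, and pull the conclusion back through the conformal map. Your version merely spells out more explicitly the localisation to subintervals $[a',b']$ with endpoints in the resolvent set and the two inclusions, which the paper's terser proof leaves implicit.
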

\begin{proof}
Choose $\varepsilon>0$ such that $a+\varepsilon< \min\{\lambda\in\Spec(A)\cap (a,b)\}$ and let $\mu\in(a,a+\varepsilon)$. With $g(w)=w-\mu$, the operator $g(A)^{-1}$ is bounded and $g(w)$ satisfies the hypothesis of Lemma~\ref{l:mt}. Let $\cM_n=g(A)\cL_n$. Then $(\cM_n)\in \Lambda_0$. Indeed, any $u\in \cH$ can be expressed as $u=g(A)v$ for $v\in \Dom(A)$, and since $(\cL_n)\in\Lambda_1$, we can find $v_n\in \cL_n$ such that $g(A)v_n\to g(A)v$. By virtue of Theorem~\ref{thm4} applied to $g(A)^{-1}$,
\[
   \lim_{n\to \infty} \Spec_2(g(A)^{-1},\cM_n)\cap \D(g(b)^{-1},g(a+\varepsilon)^{-1})=
   \Spec_\dis(g(A)^{-1})\cap (g(b)^{-1},g(a+\varepsilon)^{-1}).
\]
The fact that $g(w)$ is a conformal mapping, Lemma~\ref{l:mt} and the spectral mapping theorem ensure the desired conclusion.
\end{proof}


\begin{example} \label{ex6}
Let $A$ be the operator of examples~\ref{ex3} and \ref{ex5}. By virtue of
\eqref{eugene1} and Corollary~\ref{thm1},
\[
     \lim_{n\to\infty} \Spec_2(A,\cL_n)\subset \Spec(A)\cup (-1+i \R)
\]
for any $(\cL_n)\in \Lambda_1$.
\end{example}

\begin{corollary}\label{cor1}
If $A$ has compact resolvent and $(\mathcal{L}_n)\in\Lambda_1$, then
\begin{displaymath}
\lim_{n\to\infty}\Spec_2(A,\mathcal{L}_n) = \Spec(A)
\end{displaymath}
\end{corollary}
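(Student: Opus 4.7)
The plan is to deduce this corollary directly from Corollary~\ref{thm1} via an exhaustion of $\C$ by discs $\D(a,b)$. Since $A$ has compact resolvent, we have $\Spec_\ess(A) = \varnothing$ and $\Spec(A) = \Spec_\dis(A)$ is a countable set with no finite accumulation point.

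First I would pick sequences $a_k \to -\infty$ and $b_k \to +\infty$ in $\R \setminus \Spec(A)$; this is possible because $\Spec(A)$ is countable. Since $\Spec_\ess(A) = \varnothing$, the hypothesis of Corollary~\ref{thm1} is automatically satisfied on each $(a_k,b_k)$, yielding
\[
   \lim_{n\to\infty}\Spec_2(A,\mathcal{L}_n) \cap \D(a_k, b_k) = \Spec(A) \cap (a_k, b_k).
\]

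The second step is the exhaustion. Recall that $\D(a_k,b_k)$ is the open disc centred at $(a_k+b_k)/2$ with radius $(b_k-a_k)/2$, so taking $a_k,b_k$ roughly symmetric one has $\D(a_k,b_k)\supseteq \D(-k,k)$ eventually. Hence every $z\in\C$ lies in $\D(a_k,b_k)$ once $k$ is large enough. For the forward inclusion, given $z\in\lim_{n\to\infty}\Spec_2(A,\mathcal{L}_n)$ choose such a $k$; the equality above forces $z\in\Spec(A)\cap(a_k,b_k)\subseteq\Spec(A)$. For the reverse inclusion, any $\lambda\in\Spec(A)$ lies in $(a_k,b_k)\subset\D(a_k,b_k)$ for $k$ large, hence in the limit set by the same identity.

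There is no substantive obstacle: all the heavy lifting is already carried out in Theorem~\ref{thm4} and its consequence Corollary~\ref{thm1}. Compactness of the resolvent enters only to ensure $\Spec_\ess(A) = \varnothing$, which is precisely what allows the intervals $(a_k,b_k)$ in Corollary~\ref{thm1} to be enlarged without obstruction so as to cover the whole complex plane.
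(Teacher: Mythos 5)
Your argument is correct, and in substance it is the paper's (implicit) argument: Corollary~\ref{cor1} is stated without a separate proof, the intended reading being a direct application of Corollary~\ref{thm1} with $a=-\infty$, $b=+\infty$ (so that $\D(a,b)=\C$), since a compact resolvent gives $\Spec_\ess(A)=\varnothing$ and $\Spec(A)=\Spec_\dis(A)$. Where you deviate is in the decomposition: rather than invoking the infinite-endpoint case of Corollary~\ref{thm1} in one stroke, you apply it on finite intervals $(a_k,b_k)$ with $a_k\to-\infty$, $b_k\to+\infty$ and exhaust $\C$ by the discs $\D(a_k,b_k)$, reading off both inclusions from the resulting identities. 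This is harmless and, if anything, slightly more careful than the direct citation: the proof of Corollary~\ref{thm1} as written chooses $\mu\in(a,a+\varepsilon)$ below $\Spec(A)\cap(a,b)$ and uses $g(A)^{-1}=(A-\mu)^{-1}\in\cB(\cH)$, a step that literally presupposes a finite endpoint $a$; when $A$ has compact resolvent and is unbounded below no such $\mu$ exists for $a=-\infty$, so an exhaustion by finite intervals is the natural way to reach the unbounded case. One cosmetic remark: your insistence that $a_k,b_k\in\R\setminus\Spec(A)$ is not needed to apply Corollary~\ref{thm1} (only Theorem~\ref{thm4} asks for $a,b\notin\Spec(A)$), though it costs nothing since $\Spec(A)$ is countable and discrete here.
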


When $A$ is not semi-bounded, this statement is in stark contrast to Example~\ref{com}.



\section{Numerical applications} \label{applica}

Calculating second order spectra leads to enclosures for
discrete points of $\Spec(A)$. This can be achieved by combining
\eqref{eugene2} with Theorem~\ref{thm4}. The latter yields
\emph{a priori} upper bounds for the length of the enclosure in terms of bounds for the distance from the test space, $\cL$, to an orthonormal basis of eigenfunctions
$\cB$. In practice, these upper bounds are found from interpolation
estimates for bases of $\cL$.

\subsection{On multiplicity and approximation}
Let $\lambda\in \Spec(A)$ be an isolated eigenvalue of multiplicity $m$, so that \eqref{ths1thm4} ensures an upper bound on the estimation of $\lambda$ by $m$ conjugate pairs of $\Spec_2(A,\cL)$. If $\cL$ approximates well a basis of only $l<m$ eigenfunctions and rather poorly the remaining $m-l$ elements of $\cB$,
then it would be expected that only $l$ conjugate pairs of $\Spec_2(A,\cL)$
will be close to $\lambda$ while the remaining $m-l$ will lie at a substantial distance from this eigenvalue. We illustrates
this locking effect on the multiplicity in a simple numerical experiment.

Let $A=-\Delta=-\partial^2_x-\partial^2_y$ subject to Dirichlet boundary conditions on $\Omega=[0,\pi]^2\subset \R^2$. Then
\[
  \Spec(A)=\{j^2+k^2:j,k\in \N\}
\]
and a family of eigenfunctions is given by $u_{jk}(x,y)=\sin(jx)\sin(ky)$.
Some eigenvalues of $A$ are simple and some are multiple. In particular
the ground eigenvalue $2=1+1$ is simple and $1+k^2$ for $k=2,3,4,5$ are
double. The contrast between the two eigenfunctions $u_{k1}$ and $u_{1k}$
will increase as $k$ increases. The former will be highly oscillatory in the $x$-direction while the later will be so in the $y$-direction. If $\cL$ captures well oscillations in only one direction, we should expect one conjugate pair of $\Spec_2
(A,\cL)$ to be close to $1+k^2$ and the other to be not so close to this eigenvalue.

In order to implement a finite element scheme for the computation of the second order spectra of the Dirichlet Laplacian, the condition $\cL\subset \Dom(A)$ prescribes the corresponding basis to be at least $C^1$-conforming. We let $\cL=\cL(s)$ be generated by a basis of Argyris elements on given triangulations of $\Omega$.
Contrast between the residues in the interpolation of $u_{1k}$ and $u_{k1}$ is achieved by considering triangulation that are stretched either in the $x$ or in the $y$ direction. See Figure~\ref{laplace} right.

\begin{figure}[ht!]
\hspace{-2cm}
\begin{minipage}{5cm}\includegraphics[height=5.5cm]{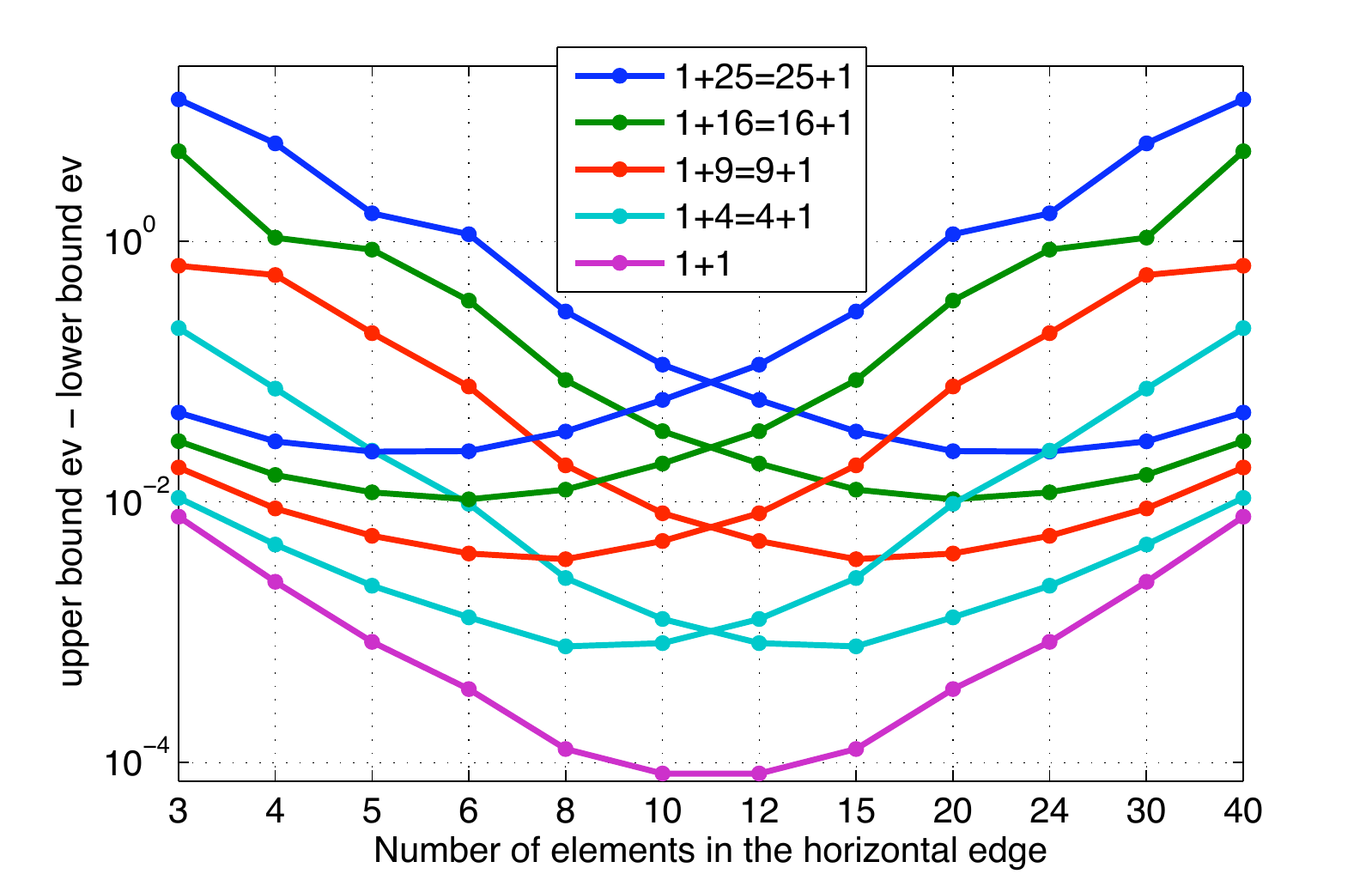}\end{minipage}
\hspace{3cm}
\begin{minipage}{5cm}\includegraphics[height=5.5cm]{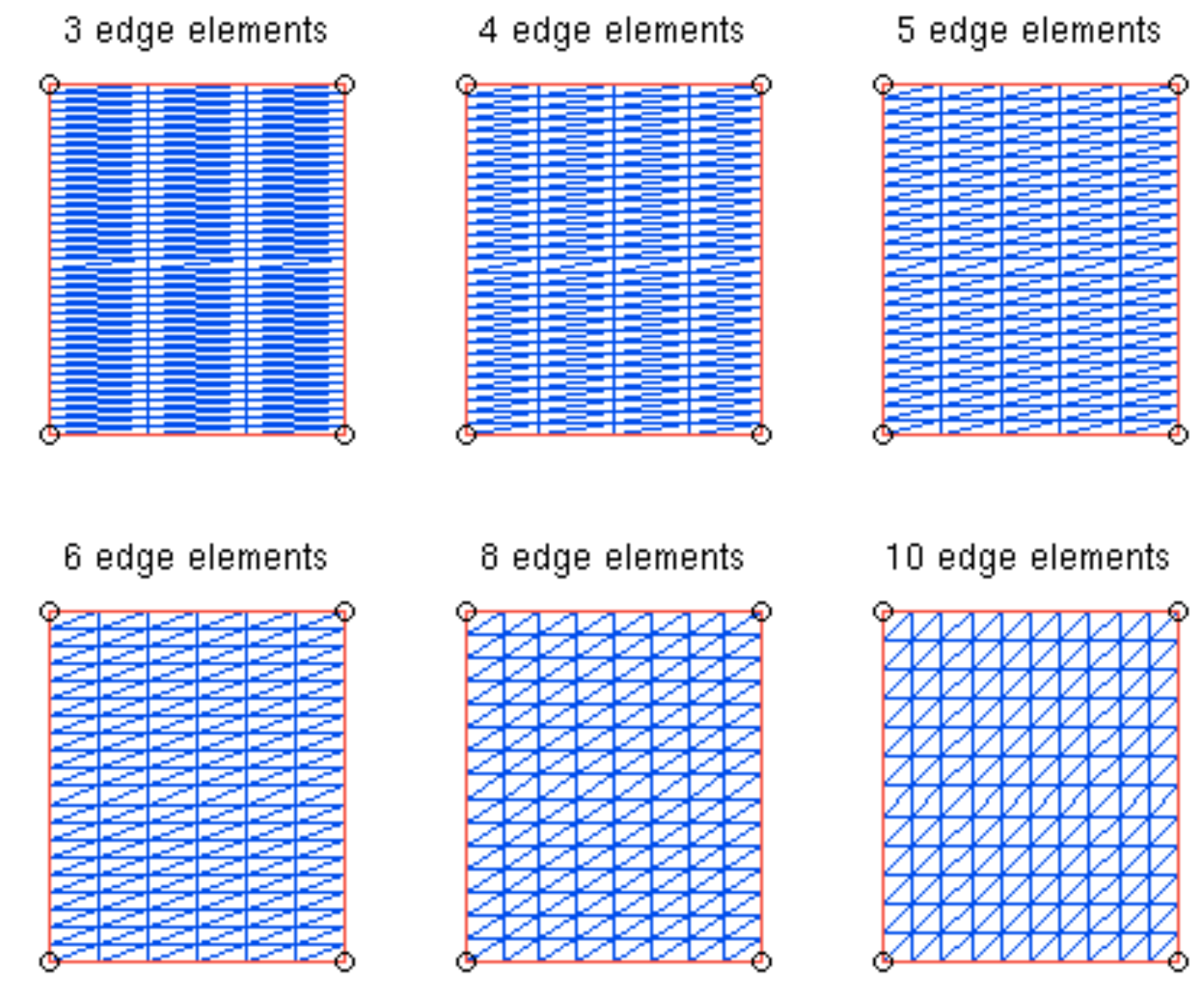}\end{minipage}
\caption{Left: change in the length of the enclosure predicted by \eqref{eugene2} for the eigenvalues $\lambda=1+k^2$ of the 2-dimensional Dirichlet Laplacian on $[0,\pi]^2$. The test subspaces $\cL$ are generated by Argyris elements on a uniform mesh of 240 triangles with a prescribed numbers of segments intersecting the boundary. Right: Corresponding mesh for the first 6 iterates of the left graph.}
\label{laplace}
\end{figure}

In this experiment we generate triangulations with a fixed total number of 240 elements, resulting from diagonally bisecting a decomposition of $\Omega$ as the union of 120 rectangles of equal ratio. We consider mesh with $s=3$, 4, 5, 6, 8, 10, 12, 15, 20, 24, 30 and 40 elements  of equal size on the lower edge $[0,\pi]\times\{0\}\subset \partial \Omega$. We then compute the pairs $z_{1k}(s),\overline{z_{1k}(s)},z_{k1}(s),\overline{z_{k1}(s)}\in\Spec_2(A,\cL(s))$ which are closer to the eigenvalues
$1+k^2$ than to any other point in $\Spec(A)$. By virtue of \eqref{eugene2}, we know that $\Re z_{1k}(s)-|\Im z_{1k}(s)|$ and $\Re z_{k1}(s)-|\Im z_{1k}(s)|$ are lower bounds for this eigenvalue and $\Re z_{1k}(s)+|\Im z_{1k}(s)|$ and $\Re z_{k1}(s)+|\Im z_{1k}(s)|$ are corresponding upper bounds.

\begin{remark} \label{2dlaplacian}
Even though we have the same number of elements in each of the above mesh, in general they do not have the same amount of elements intersecting $\partial \Omega$. Then typically $\dim \cL(s)\not=\dim \cL(t)$ for $s\not=t$, although these numbers do not differ substantially. The precise dimension of the test spaces
are as follows:

\smallskip

\centerline{\begin{tabular}{c|c|c|c|c|c|c}
$s$  & $3$ & $4$ & $5$& $6$ & $8$& $10$ \\
\hline $\dim \cL(s)$ & $2774$   & $2648$  & $2578$  & $2536$   & $2494$  & $2480$
\end{tabular}}
\end{remark}

On the left side of Figure~\ref{laplace} we have depicted the residues $2|\Im z_{1k}(s)|$ and  $2|\Im z_{k1}(s)|$ for each one of the eigenvalues $1+k^2$ in the vertical axis, versus $s$ in the horizontal axis on a semi-log scale. The graph suggests that
the order of approximation for all the eigenvalues changes at least two orders of magnitude as $s$ varies. The minimal residue in the approximation of the ground eigenvalue is achieved when $s=10$ and $s=12$. This corresponds to low contrast in the basis of $\cL(s)$. When the eigenvalue is multiple, however, the minimal residue is achieved by increasing the contrast in the basis. As this contrast increases, one conjugate pair will get closer to the real axis while the other will move away from it. The greater the $k$ is, the greater contrast is needed to achieve a minimal residue and the further away the conjugate pairs travel from each other.

This experiment suggests a natural extension for Theorem~\ref{thm4}. If only
$l<m$ members of $\cB$ are close to $\cL$, then only $l$ conjugate pairs on
$\Spec_2(A,\cL)$  will be close to the corresponding eigenvalue.

\subsection{Optimality of convergence to eigenvalues} \label{sturm_sec}
We saw in Example~\ref{nonoptimal} that the upper bound established in Theorem~\ref{thm4} is sub-optimal. We now examine this assertion from a practical perspective.

Let $A=-\partial_x^2+V$ acting on $\cH=L^2(0,\pi)$ where $V$ is a smooth real-valued bounded potential. Let
\[
    \Dom(A)=\{u\in H^2(0,\pi):u(0)=u(\pi)=0\},
\]
so that $A$ defines a self-adjoint operator semi-bounded.
Note that $\lambda\in\Spec(A)=\Spec_\dsc(A)$ if and only if $\lambda$ solves the Sturm-Liouville eigenvalue problem
$Au=\lambda u$ subject to homogeneous Dirichlet boundary conditions at $0$ and $\pi$.

Let $\Xi$ be an equidistant partition of $[0,\pi]$ into $n$ subintervals
$I_l=[x_{l-1},x_l]$ of length $h=\pi/n=x_{l}-x_{l-1}$. Let
\[
   \cL(h,k,r)=V_h(k,r,\Xi)=\{v\in C^k(0,\pi)\,:\,
   v\upharpoonright _{I_l}\in P_r(I_l),\, 1\leq l \leq n,\,
   v(0)=v(\pi)=0 \}
\]
be the finite element space generated by $C^k$-conforming elements
of order $r$ subject to Dirichlet boundary conditions at $0$ and $\pi$; \cite{MR0520174}. An implementation of standard interpolation error estimates
for finite elements combined with Theorem~\ref{thm4}
ensures the following.

\begin{lemma} \label{1Dconvergence}
 Let $A$, $\cH$ and $\cL(h,k,r)$ be as in the previous paragraphs. Let
\[
\Spec(A)=\{\lambda_1<\lambda_2<\ldots \}.
\]
Let $a_j=\frac14 \lambda_j+\frac34 \lambda_{j-1}$ and
$b_j=\frac14 \lambda_j+\frac34 \lambda_{j+1}$, where $\lambda_0=-\infty$.
For all $r>k\geq 3$, there exist a constant $c>0$, dependant on $j$, $k$
and $r$, but independent of $h$, such that
\[
    \Spec_2(A,\cL(h,k,r))\cap \D(a_{j},b_{j})=
    \{z_{hkr},\overline{z_{hkr}}\} \subset \D(\lambda_j-ch^{\frac{r-3}{2}},
    \lambda_j+ch^{\frac{r-3}{2}})
\]
for all $h>0$ sufficiently small.
\end{lemma}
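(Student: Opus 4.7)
The plan is to invoke Theorem~\ref{thm4} with $(a,b)=(a_j,b_j)$ together with a classical finite element interpolation estimate. By the definition of $a_j$ and $b_j$, the interval $(a_j,b_j)$ meets $\Spec(A)$ only at $\lambda_j$; since the one-dimensional Sturm--Liouville problem with smooth $V$ has simple eigenvalues, $s=1$ and $m_1=m=1$, so the orthonormal family $\cB$ reduces to a single normalised eigenfunction $u_j$. Standard elliptic regularity forces $u_j\in C^\infty[0,\pi]$, so $|u_j|_{H^{r+1}}$ is finite. The whole argument therefore reduces to producing a bound of the form $\dist_{\Dom(A^2)}(u_j,\cL(h,k,r))\le C h^{r-3}$.

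Next I would bring in standard interpolation in $H^m$. Since $C^k$-conforming piecewise polynomials of order $r$ lie in $H^{k+1}\cap H^1_0$, the hypothesis $k\ge 3$ gives $\cL(h,k,r)\subset H^4\cap H^1_0$. For a Bramble--Hilbert type quasi-interpolation operator $I_h:H^{r+1}\cap H^1_0\to\cL(h,k,r)$ one has
\[
\|u_j-I_h u_j\|_{H^m} \le C_m\, h^{r+1-m}\, |u_j|_{H^{r+1}}, \qquad 0\le m\le k+1.
\]
With $A=-\partial_x^2+V$ and $V\in C^\infty$, each graph-norm contribution is bounded by an $H^{2p}$ norm of the error via the Leibniz rule,
\[
\|A^p(u_j-I_h u_j)\|_{L^2}\le C'\,\|u_j-I_h u_j\|_{H^{2p}}\le C''\, h^{r+1-2p}, \qquad p=0,1,2,
\]
so $\|u_j-I_h u_j\|_{\Dom(A^2)}\le C h^{r-3}$.

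To close the argument I would set $\varepsilon=c\,h^{(r-3)/2}$ with $c>0$ so small that $C h^{r-3}<\kappa\varepsilon^2$, where $\kappa=d^2/\gamma$ is the constant associated to $(a_j,b_j)$ via Theorem~\ref{thm4}; for $h$ sufficiently small, $\varepsilon$ also satisfies the other smallness condition imposed by that theorem. Theorem~\ref{thm4} then yields
\[
\Spec_2(A,\cL(h,k,r))\cap \D(a_j,b_j)\subset \D(\lambda_j-\varepsilon,\lambda_j+\varepsilon),
\]
with total algebraic multiplicity $2m_j=2$. Since non-real members of any second order spectrum always appear in conjugate pairs, these two counted eigenvalues must assemble into exactly one pair $\{z_{hkr},\overline{z_{hkr}}\}$, which is the claim.

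The main obstacle I anticipate is a bookkeeping issue surrounding the domain $\Dom(A^2)$. Membership there requires $Av\in H^1_0$, which translates into the hidden boundary condition $v''(0)=V(0)v(0)$ (and similarly at $\pi$), and this is not built into $\cL(h,k,r)$. Strictly $\cL(h,k,r)\not\subset\Dom(A^2)$, so the distance in Theorem~\ref{thm4} must be reinterpreted as the formal graph-norm quantity with $Av_h$ and $A^2 v_h$ computed pointwise; both are $L^2$ once $v_h\in H^4$. Reading Lemma~\ref{lem2} carefully, only $\|A^p(u_j-v_h)\|$ for $p=0,1,2$ is actually invoked, so Theorem~\ref{thm4} remains valid under this relaxed interpretation. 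Once this point is made precise, what remains is routine finite element interpolation combined with smoothness of $u_j$.
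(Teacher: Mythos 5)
Your argument coincides with the paper's: the published proof is exactly this route --- apply the standard interpolation estimate $\|v-v_h\|_{H^p(0,\pi)}\le c\,h^{r+1-p}$ to the eigenfunction (which is simple and $C^\infty$), obtaining $\dist_{\Dom(A^2)}(\cB,\cL(h,k,r))=O(h^{r-3})$, and feed this into Theorem~\ref{thm4} with $\varepsilon\sim h^{(r-3)/2}$. The $\Dom(A^2)$-conformity issue you flag at the end is genuine but is passed over silently in the paper as well; note, though, that your justification (``only $\|A^p(u_j-v_h)\|$ is invoked'') is not quite sufficient, since the proof of Lemma~\ref{lem2} uses the identification of the form matrix with $P(\tilde{A}-z)^2\upharpoonright\cL$, which for $v\notin\Dom(A^2)$ picks up boundary terms under integration by parts; a cleaner repair is to observe that $u_j''=(V-\lambda_j)u_j$ vanishes at $0$ and $\pi$, so the nodal interpolant of $u_j$ actually lies in $\cL\cap\Dom(A^2)$, and the argument of Lemma~\ref{lem2} only requires the approximating vectors (not all of $\cL$) to be $\Dom(A^2)$-conforming once $\sigma_{A,\cL}$ is read as the matrix quantity.
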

\begin{proof}
Use the well-known estimate
\[
     \|v-v_h\|_{H^p(0,\pi)}\leq c h^{r+1-p}
\]
where $v_h\in \cL(h,k,r)$ is the finite element interpolant of
$v\in C^k\cap H^{r+1}_\Xi(0,\pi)$; \cite[Theorem~3.1.6]{MR0520174}. Note that all eigenvalues of $A$ are simple
and its eigenfunctions are $C^\infty$.
\end{proof}

Therefore each individual eigenvalue $\lambda_j$
is approximated by second order spectral points at a rate
$O(h^{\frac{r-3}{2}})$ for test
subspaces generated by a basis of $C^3$-conforming finite elements
of order $r> 4$. Due to the high regularity required on the approximating basis,
this results is only of limited practical use. In fact,
only $k\geq 1$ (and $r\geq 3$) is required for
$\cL(h,k,r)\subset \Dom(A)$. Simple numerical experiments confirm that the exponent predicted by Lemma~\ref{1Dconvergence} is  not optimal, see Figure~\ref{sturm}.

\begin{figure}[ht!]
\hspace{-1cm}
\begin{minipage}{6cm}\includegraphics[height=6cm]{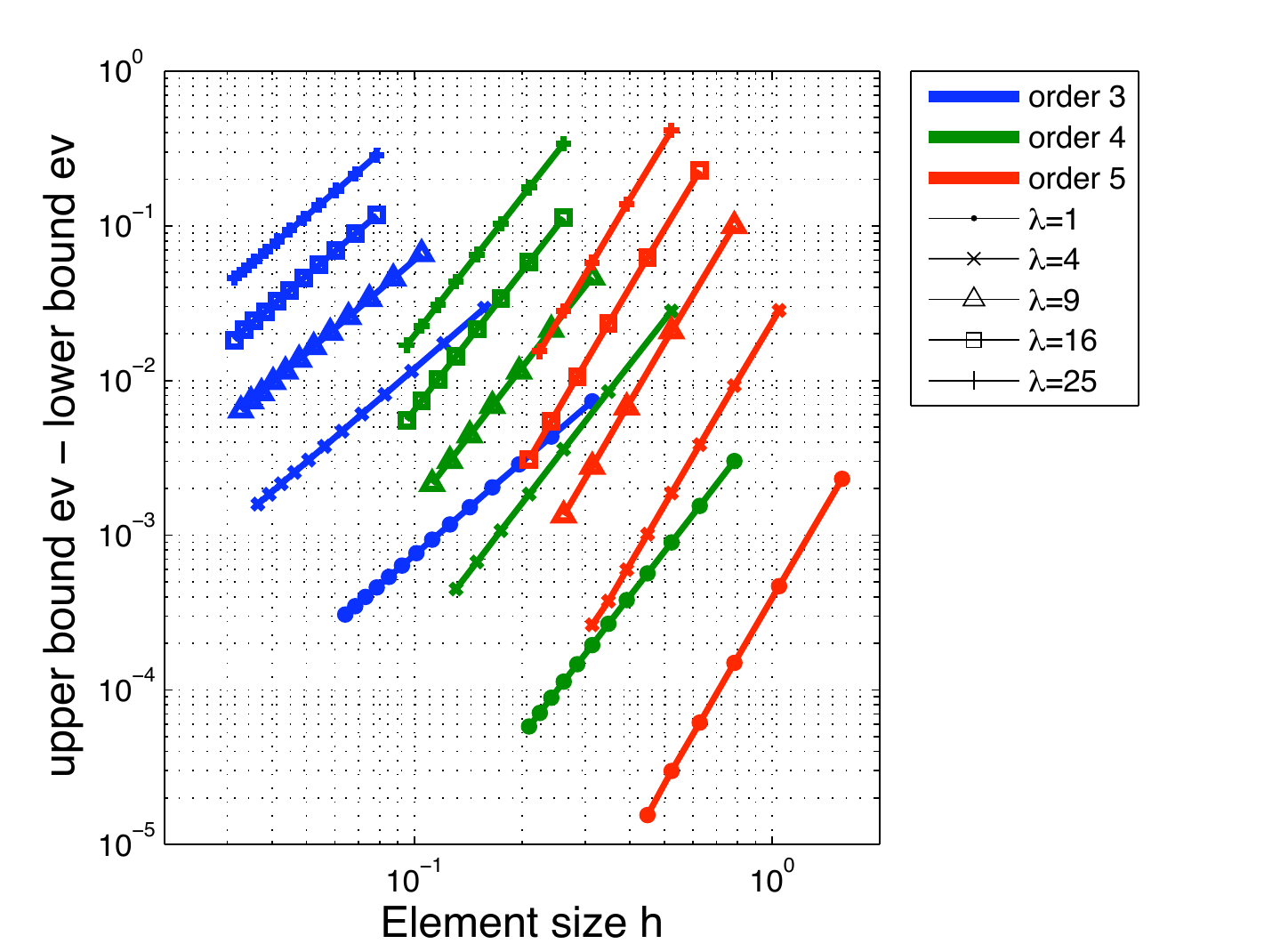}\end{minipage}
\hspace{2cm}
\begin{minipage}{4cm}\centerline{Table: $p$ such that $|\Im  z_{h1r}(j)|\sim h^p$} \ \newline \begin{tabular}{c|c|c|c}  $\lambda$  & $r=3$ & $r=4$ & $r=5$ \\
\hline
1 & 1.9979 & 2.9900 & 3.9849 \\
4 & 1.9983 & 2.9878 & 3.9191 \\
9 & 2.0023 & 2.9838 & 3.9235 \\
16 & 2.0099& 2.9764 & 3.9152\\
25 & 1.9962& 2.9673 & 3.8871
\end{tabular}\end{minipage}
\caption{Figure corresponding to Example~\ref{sturm1}. Left: Loglog plot of
$2|\Im  z_{h1r}(j)|$ versus element sizes. Right: Slopes of linear interpolations of these graphs.}
\label{sturm}
\end{figure}

\begin{example} \label{sturm1}
Suppose that $V=0$ so that $A=-\partial_x^2$. In this experiment we fix an equidistant partition $\Xi$ and let $\cL=\cL(h,1,r)$ be the space of Hermite elements of order $r=3,4,5$ satisfying Dirichlet boundary conditions in $0$ and $\pi$. We then find the conjugate pairs $\{z_{h1r}(j),\,
\overline{z_{h1r}(j)}\}\in \Spec_2(A,\cL)$ which are close
to $\lambda_j=j^2\in \Spec(A)$ for $j=1,2,3,4,5$.
On the left of Figure~\ref{sturm} we have depicted $2|\Im  z_{h1r}(j)|$
versus small values of $h$. On the right side we have tabulated
the slopes of linear fittings of the lines found.
\end{example}

This experiment suggests that the order of approximation of
$\lambda\in \Spec_\dsc(A)$ from its closest $z\in \Spec_2(A,\cL)$ is
\begin{equation} \label{conje}
    |z-\lambda|\sim \dist_{\Dom(A)} (\cB,\cL).
\end{equation}
Indeed, interpolation by Hermite elements of order $r$ has an $H^2$-error
proportional to $h^{r-1}$. The same convergence
rate is confirmed by Example~\ref{nonoptimal}.

\begin{example} \label{mathieu}
Let $V(x)=2\cos(2x)$ be the Mathieu potential. Let $\cL$ be as in Example~\ref{sturm1}. The exponents $p$ reported in Figure~\ref{mathieu_tab} further confirm \eqref{conje}.
\end{example}

\begin{remark} \label{comp_galer}
The error in the estimation of the eigenvalues of a one-dimensional elliptic
problem of order $2p$ by the Galerkin method using Hermite elements of order $r$ is proportional to $h^{2(r+1-p)}$; \cite[Theorem~6.1]{MR0443377}.
If $A$ is a second order differential operator, the quadratic eigenvalue problem  \eqref{qep} gives rise to a non-self-adjoint fourth order problem which is to be solved by a projection-type method. Thus \eqref{conje} is consistent with this estimate if we take into account the improved enclosure  \eqref{strauss}, see Section~\ref{improve}.
\end{remark}

\subsection{Improved accuracy} \label{improve}
Estimate \eqref{strauss} can be combined with
\eqref{eugene2} to provide improved \emph{a posteriori} enclosures
for $\lambda\in\Spec_\dsc(A)$. The key idea is to estimate an upper bound $a\!>\!\nu\!=\!\sup\{\tilde{\lambda}\!\in\!\Spec(A):\tilde{\lambda}\!<\!\lambda\}$, a lower bound
$b\!<\!\nu\!=\!\inf\{\tilde{\lambda}\!\in\!\Spec(A):\tilde{\lambda}\!>\!\lambda\}$ and
$z\in \Spec_2(A,\cL)$ such that \[(\Re z-|\Im z|,\Re z+|\Im z|)\cap \Spec(A)=\{\lambda\}.\]
Here $\mu$ and $\nu$ can be elements of the discrete or the essential spectrum of $A$.
The one-sided bounds $a$ and $b$ can be found from $\Spec_2(A,\cL)$ or by analytical means. If $b-a$ is sufficiently large and $|\Im z|$ is sufficiently small, \eqref{strauss}
improves upon \eqref{eugene2}. We illustrate this approach a practical settings.

\begin{example} \label{sturm_accurate}
Let $A=-\partial_x^2+2\cos(2x)$, $\cH=L^2(0,\pi)$ and $\cL=\cL(h,1,r)$ for $r=3,4,5$ be as in Example~\ref{mathieu}. In Figure~\ref{mathieu_tab} we have computed inclusions for the first five eigenvalues of $A$ by directly employing \eqref{eugene2} and by
the technique described in the previous paragraph. In the latter case,
we have found $a_j=a$
from the computed upper bound for $\lambda_{j-1}<\lambda_j$ ($a_1=-\infty$) using \eqref{eugene2}. Similarly for $b_j=b$.  These calculations can be compared
with those in \cite[Tables~1,2,3]{Arbenz:1983}. See also \cite[\S 7.4]{MR0010757}.
\end{example}

\begin{figure}[ht!]
\centerline{\begin{tabular}{c|c||ccccc}
$r$ & $j$ & 1 & 2 & 3 & 4 & 5 \\
\hline \hline
$3$ &  $n=48$ & & & & & \\
& $^\mathrm{upper}_\mathrm{lower}\mathrm{bounds}$ \eqref{eugene2}
& $-0.1^{087}_{117}$   & $3.9^{212}_{127}$  & $9.0^{619}_{334}$  & $1^{6.07}_{5.99}$   & $2^{5.12}_{4.92}$ \\
& $^\mathrm{upper}_\mathrm{lower}\mathrm{bounds}$ \eqref{strauss}
& $-0.11024^{881}_{936}$ & $3.91702^{928}_{122}$ &
$9.0477^{788}_{100}$ & $16.03^{322}_{277}$ & $25.0^{219}_{199}$ \\
\cline{2-2} & $n=10\!:\!5\!:\!50$  &&&&&\\ &
{\small $|\Im z_{h13}(j)|\sim h^p$} & $p\!\approx\!1.9915$ & $p\!\approx\!1.9847$ & $p\!\approx\!1.9790$ & $p\!\approx\!1.9682$ & $p\!\approx\!1.9532$ \\
\hline
$4$& $n=24$ & & & & & \\
& $^\mathrm{upper}_\mathrm{lower}\mathrm{bounds}$ \eqref{eugene2}
& $-0.110^{037}_{460}$ & $3.91^{767}_{637}$ & $9.04^{992}_{555}$ &
$16.0^{406}_{253}$ &  $2^{5.04}_{4.99}$ \\
& $^\mathrm{upper}_\mathrm{lower}\mathrm{bounds}$ \eqref{strauss}
& $-0.1102488^{170}_{282}$ & $3.917024^{878}_{690}$ & $9.0477^{401}_{385}$ &
$16.0329^{784}_{635}$ &  $25.020^{896}_{795}$ \\
\cline{2-2} & $n=9\!:\!2\!:\!19$ &&&&& \\ &  {\small $|\Im z_{h14}(j)|\sim h^p$}
& $p\!\approx\!2.9816$ & $p\!\approx\!2.9680$ & $p\!\approx\!2.9629$& $p\!\approx\!2.9486$ & $p\!\approx\!2.9238$ \\
\hline
$5$ & $n=12$ & & & & & \\
& $^\mathrm{upper}_\mathrm{lower}\mathrm{bounds}$ \eqref{eugene2}
& $-0.110^{151}_{345}$ & $3.91^{736}_{668}$ & $9.04^{886}_{661}$ &
$16.0^{372}_{286}$ & $25.0^{356}_{060}$ \\
& $^\mathrm{upper}_\mathrm{lower}\mathrm{bounds}$ \eqref{strauss}
& $-0.11024881^{697}_{932}$ & $3.917024^{800}_{750}$ & $9.047739^{506}_{080}$ &
$16.0329^{727}_{680}$ & $25.0208^{652}_{210}$ \\
\cline{2-2} & $n=9\!:\!1\!:\!12$   &&&&& \\ & {\small $|\Im z_{h15}(j)|\sim h^p$} &$p\!\approx\!3.9768$&$p\!\approx\!4.0214$&$p\!\approx\!3.9557$&$p\!\approx\!3.9432$& $p\!\approx\!3.9183$ \\
\end{tabular}}
\caption{\label{mathieu_tab} Enclosures for $\lambda_j$ in the case of
$A=-\partial_x^2+2\cos(2x)$ by direct application of \eqref{eugene2}
and by the improved \eqref{strauss}.
Here $\cL=\cL(\frac{\pi}{n},1,r)$.}
\end{figure}

We now consider an example from solid state physics to illustrate the case where $\mu$ and $\nu$ are not in the discrete spectrum.

\begin{example} \label{christalline}
Let $A=-\partial_x^2+\cos(x)-e^{-x^2}$ acting on $L^2(-\infty,\infty)$
where $\Dom(A)=H^2(-\infty,\infty)$. Then $A$ is a semi-bounded operator but now
$\Spec_\ess(A)$ consists of an infinite number of non-intersecting bands, separated by gaps, determined by the periodic part of the potential. The endpoints of these
bands can be found analytically. They correspond to the so called Mathieu characteristic values. The addition of a fast decaying perturbation gives rise to a non-trivial discrete spectrum.  Non-degenerate isolated eigenvalues can appear below the bottom of the essential spectrum or in the gaps between bands.

In Figure~\ref{christalline_eigenvalues} we report on computation of inclusions for the
first three eigenvalues in $\Spec_\dsc(A)$: $\lambda_1<\min \Spec_\ess(A)$, $\lambda_2$ in the first gap of the essential spectrum and $\lambda_3$ in the second gap.
No other eigenvalue is to be found in any of these gaps. Here $\cL$ is the space of Hermite elements of order 3 on a mesh of $20l$ segments of equal size $h=0.1$ in $[-l,l]$ subject to Dirichlet boundary conditions at $\pm l$. For the improved enclosure,  $a$ and $b$ are approximations of the endpoints of the gaps where the $\lambda_j$ lie.
\end{example}

\begin{figure}[ht!]
\begin{tabular}{c|c|cc|c}
 Eigenvalue &  $^\mathrm{upper}_\mathrm{lower}\mathrm{bounds}$ \eqref{eugene2} & $a$ &$b$ & $l$ \\
     & $^\mathrm{upper}_\mathrm{lower}\mathrm{bounds}$ \eqref{strauss} &&\\
\hline
$\lambda_1$ &  $-0.409^{535}_{718}$& $-\infty$ & $-0.378490$ & $25$\\
    &  $-0.409627^{318}_{588}$ &&\\
$\lambda_2$ &  $0.377^{791}_{494}$ & $-0.347670$ & $0.594800$& $50$\\
    &  $0.377633^{116}_{000}$ &&\\
$\lambda_3$ &  $1.18^{219}_{164}$&  $0.918058$ & $1.29317$  &$100$\\
    &   $1.18191^{726}_{629}$ &&
    \end{tabular}
\caption{\label{christalline_eigenvalues} Enclosures for the first three eigenvalues
of the perturbed periodic Schr\"odinger operator of Example~\ref{christalline}.
The numerical values of $a$ and $b$ are found in \cite{MR0167642} (see also \cite[Table~1]{bole}).}
\end{figure}

Since the eigenvectors of $A$ decay exponentially fast as $x\to \pm\infty$, the members of $\Spec_\dsc(A)$ are close to eigenvalues of the regular Sturm-Liouville problem $Au=\lambda u$ subject to $u(-l)=u(l)=0$ for sufficiently large $l<\infty$. The numerical method considered in this example does not distinguish between the $l=\infty$
and the large $l<\infty$ eigenvalue problem. For instance, the inclusion found for $\lambda_1$ in Figure~\ref{christalline_eigenvalues} is also an inclusion for the Dirichlet ground eigenvalue of $A\upharpoonright L^2(-25,25)$.

In the case of $\lambda_2$ and $\lambda_3$, which lie in
gaps of $\Spec_\ess(A)$, they should be close to high energy eigenvalues of the finite interval problem. Indeed, for the parameters considered in Figure~\ref{christalline_eigenvalues}, the inclusion for $\lambda_2$ is also an inclusion for the $17^{\text{th}}$ eigenvalue of $A\!\upharpoonright\! L^2(-50,50)$. Similarly that for
$\lambda_3$ is an inclusion for the $65^{\text{th}}$ eigenvalue of $A\!\upharpoonright \!L^2(-100,100)$.

\begin{remark}  \label{coment}
The error in the Galerkin approximation of the $j$-th eigenvalue of a regular Sturm-Liouville problem via Hermite elements of order 3 on an uniform mesh of element size $h$ is known to be $O(j^8h^6)$; \cite[\S6.2-(34)]{MR0443377}. Evidently the latter is only an upper bound, there might be high energy eigenvalues which are approximated accurately: such as the $17^{\text{th}}$ eigenvalue of $A\!\upharpoonright\! L^2(-50,50)$ or the $65^{\text{th}}$ eigenvalue of $A\!\upharpoonright\! L^2(-100,100)$ in the above example.
In practice it is not easy to take advantage of this observation as the Galerkin method also produces a considerable amount of spurious eigenvalues
in low-energy regions of the spectrum. These correspond to approximations of singular Weyl sequences of points in the essential spectrum of $A$.
\end{remark}

\section{Accumulation points outside the real line}
\label{outside_real}

Since $\R$ is second countable and the essential spectrum of $A$ is closed, there always exists a family of open intervals $(a_j,b_j)\subset \R$ such that $\R\setminus \Spec_\ess(A)=\cup_{j=1}^\infty(a_j,b_j)$. Throughout this section we will be repeatedly referring to the following two $A$-dependent regions of the complex plane:
\[
     \B=\B_A=\bigcup_{j=1}^\infty \D(a_j,b_j) \qquad \text{and}
     \qquad \A=\A_A =\D[\inf \Spec_\ess A,\sup \Spec_\ess A]\setminus
     \B. 
\]
For any given set $\Omega\subset \C$ and $\varepsilon>0$ we will denote the open 
\emph{$\varepsilon$-neighbourhood} of $\Omega$ by
\[
     [\Omega]_\varepsilon=\{z\in\C:\dist(z,\Omega) <\varepsilon\}.
\] 

\begin{lemma} \label{bound_funct_sigma}
Let $\cN=\overline{\cN}\subseteq \B$ be compact and such that $\cN\cap \Spec(A)=\varnothing$. Let $(\cL_n)\in \Lambda_2$. There exists $s_\cN>0$ and $N>0$ such that
\[
     \sigma_{A,\cL_n}(z)\geq s_\cN \qquad \forall z\in \cN \text{ and }n\geq N.
\]
\end{lemma}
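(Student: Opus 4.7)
The plan is to localise the estimate, reducing to finitely many instances of Lemma~\ref{lem2}. First, since $\cN$ is compact and $\B = \bigcup_j \D(a_j,b_j)$ is an open cover, extract a finite subcover $\D(a_1,b_1),\ldots,\D(a_J,b_J)$ (relabelling for convenience). By covering $\cN$ by small closed neighbourhoods each sitting inside some $\D(a_k,b_k)$ and extracting a finite refinement, one can write $\cN=\bigcup_{k=1}^J \cN_k$ with each $\cN_k$ compact and $\cN_k\subset \D(a_k,b_k)$. Since the required bound will be obtained by taking the minimum of finitely many local bounds, it suffices to prove the estimate on each $\cN_k$ separately.

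Next, fix $k$. Since $\cN_k$ is a compact subset of the open disc $\D(a_k,b_k)$, shrink the disc to $\D(a_k',b_k')$ with $a_k<a_k'<b_k'<b_k$ still containing $\cN_k$, and perturb the endpoints slightly to ensure $a_k',b_k'\not\in \Spec(A)$. This is the crucial step: the discrete spectrum in $(a_k,b_k)$ may in general consist of infinitely many eigenvalues accumulating at $a_k$ or $b_k$, but in $[a_k',b_k']$, which sits at positive distance from $\{a_k,b_k\}$, the discrete spectrum is a finite set $\{\lambda_1^{(k)}<\cdots<\lambda_{s_k}^{(k)}\}$ because $(a_k,b_k)\cap \Spec_\ess(A)=\varnothing$ and eigenvalues can only accumulate at the essential spectrum. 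Let $\cB_k$ be an associated orthonormal basis of eigenfunctions, and let $\gamma_k$ denote the constant from \eqref{gamma} corresponding to the interval $(a_k',b_k')$ and these eigenvalues.

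Now apply Lemma~\ref{lem2} with interval $(a_k',b_k')$. Uniformity in $z\in \cN_k$ comes from two positive constants:
\[
   d_k := \inf_{z\in \cN_k}\dist(z,\Spec(A))>0 \qquad\text{and}\qquad \alpha_k := \inf_{z\in \cN_k}\alpha_{(a_k',b_k')}(z)>0,
\]
the first because $\cN_k$ is compact and disjoint from $\Spec(A)$, the second because $\alpha_{(a_k',b_k')}(\cdot)$ is continuous and strictly positive on the open disc $\D(a_k',b_k')\supset \cN_k$. Choosing $\delta_k:=\alpha_k d_k^2/(2\gamma_k)$, Lemma~\ref{lem2} yields a fixed $s_k:=\alpha_k d_k^2/(5|b_k'-a_k'|^2)>0$ such that $\sigma_{A,\cL}(z)\geq s_k$ for all $z\in \cN_k$ whenever $\dist_{\Dom(A^2)}(\cB_k,\cL)<\delta_k$.

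Finally, since $(\cL_n)\in \Lambda_2$ and each $\cB_k$ is a finite subset of $\Dom(A^2)$, we have $\dist_{\Dom(A^2)}(\cB_k,\cL_n)\to 0$, so there exists $N_k$ with $\dist_{\Dom(A^2)}(\cB_k,\cL_n)<\delta_k$ for all $n\geq N_k$. Setting $s_\cN:=\min_{k}s_k$ and $N:=\max_k N_k$ gives the claim. The main obstacle I anticipate is precisely the potential accumulation of eigenvalues of $A$ at the endpoints of the gaps $(a_k,b_k)$, which is what forces the local shrinkage to $(a_k',b_k')$ so that Lemma~\ref{lem2} (which requires \emph{finitely many} eigenvalues in the working interval) becomes applicable; everything else is a compactness argument combined with the strong density property encoded in $\Lambda_2$.
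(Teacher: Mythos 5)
Your argument is correct and follows essentially the same route as the paper: a finite subcover of $\cN$ by the discs $\D(a_j,b_j)$, a decomposition into compact pieces, shrinking the intervals so that only finitely many discrete eigenvalues remain, and then Lemma~\ref{lem2} applied with constants made uniform over each piece by compactness, with the $\Lambda_2$ assumption supplying $\dist_{\Dom(A^2)}(\cB_k,\cL_n)\to 0$. The only minor point is the degenerate case in which a shrunk interval contains no eigenvalues at all (your $\gamma_k$ would then vanish and $\delta_k$ be undefined); there one simply invokes Lemma~\ref{lem1} directly, which is why the paper cites both lemmas.
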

\begin{proof} $\B=\varnothing$  if and only if $\Spec(A)=\R$ and in this case there is nothing to proof, so we assume $\B\not=\varnothing$. Since $\cN$ is compact and
every covering of a compact set has a finite sub-covering there exists a finite family
$\cF$ such that $\cN\subset \cup_{j\in \cF}\D(a_j,b_j)$. Then we can decompose $\cN=\cup_{j\in \cF}\cN_j$ where each $\cN_j=\overline{\cN_j}$ are compact and such that
$\cN_j\subset \D(a_j,b_j)$. Since $\cN\cap \Spec(A)=\varnothing$, there exists
$\varepsilon>0$ such that $\cN_j\subset \D(a_j+\varepsilon,b_j-\varepsilon)$. By construction each interval $(a_j,b_j)$ can only
intersect $\Spec(A)$ at a finite number of isolated eigenvalues of finite multiplicity. 
Therefore the conclusion follows from lemmas~\ref{lem1} and \ref{lem2} applied in $(a_j+\varepsilon,b_j-\varepsilon)$ using analogous arguments as in the proof of Theorem~\ref{thm4}.
\end{proof}

Combining this lemma with a similar argument as in the proof of Corollary~\ref{thm1}, we immediately achieve the following theorem which generalises Corollary~\ref{cor1}.

\begin{theorem} \label{upper_inclusion}
For any $(\cL_n)\in \Lambda_1$,
\[
    \Spec_\dsc(A)\subseteq \lim_{n\to\infty} \Spec_2(A,\cL_n)\subseteq 
    \A\cup \Spec_\dsc(A).
\]
\end{theorem}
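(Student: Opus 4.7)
The plan is to establish the lower inclusion via a local application of Corollary~\ref{thm1}, and the upper inclusion by transplanting the problem to a bounded operator where Lemma~\ref{bound_funct_sigma} applies. For the lower inclusion, fix $\lambda\in\Spec_\dsc(A)$. Its isolation provides $a<\lambda<b$ with $a,b\notin\Spec(A)$ and $(a,b)\cap\Spec_\ess(A)=\varnothing$, and Corollary~\ref{thm1} then yields $\lim_{n\to\infty}\Spec_2(A,\cL_n)\cap\mathbb{D}(a,b)=\Spec_\dsc(A)\cap(a,b)\ni\lambda$.

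For the upper inclusion, take $z\notin\A\cup\Spec_\dsc(A)$. Following the approach in the proof of Corollary~\ref{thm1}, choose $\mu\in\R\setminus\Spec(A)$, set $g(w)=w-\mu$ and $F(w)=g(w)^{-1}$, so that $F(A)\in\mathcal{B}(\cH)$ and, as shown there, $\cM_n:=g(A)\cL_n$ satisfies $(\cM_n)\in\Lambda_0$ relative to $F(A)$; since $F(A)$ is bounded, also $(\cM_n)\in\Lambda_2$. By Lemma~\ref{l:mt}, $z\in\Spec_2(A,\cL_n)$ iff $F(z)\in\Spec_2(F(A),\cM_n)$. Since $F$ fixes $\R$ set-wise and by the spectral mapping theorem $\Spec_\ess F(A)=F(\Spec_\ess A)$, it sends each gap disc $\mathbb{D}(a_j,b_j)$ of $\B_A$ onto a gap disc (or half-plane) of $\B_{F(A)}$. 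Consequently $F(z)\in\B_{F(A)}\setminus\Spec F(A)$ whenever $z\in\B_A\setminus\Spec(A)$, and one can choose a compact neighbourhood $\cN$ of $F(z)$ satisfying the hypotheses of Lemma~\ref{bound_funct_sigma}. That lemma provides $s_\cN>0$ and $N$ such that $\sigma_{F(A),\cM_n}(F(z))\geq s_\cN$ for $n\geq N$, forcing $F(z)\notin\Spec_2(F(A),\cM_n)$ for large $n$; unravelling via Lemma~\ref{l:mt} then gives $z\notin\lim_{n\to\infty}\Spec_2(A,\cL_n)$.

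The main obstacle is the case $z\notin\A\cup\B$, which arises when $A$ has discrete eigenvalues outside $[\inf\Spec_\ess(A),\sup\Spec_\ess(A)]$ and $z$ lies in the resulting excess region of $\mathbb{D}[\lambda_{\min}(A),\lambda_{\max}(A)]\setminus\mathbb{D}[\inf\Spec_\ess(A),\sup\Spec_\ess(A)]$. Here a careful choice of $\mu$ inside the unbounded gap separating those exterior discrete eigenvalues from the essential spectrum makes $F$ fold this excess region into a bounded gap disc of $\B_{F(A)}$, reducing back to the previous case. Combined with the a priori inclusion \eqref{eugene0} applied to $F(A)$, this completes the argument.
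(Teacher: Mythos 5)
Your overall strategy coincides with the paper's: the paper's proof is precisely ``Lemma~\ref{bound_funct_sigma} combined with the M\"obius argument of Corollary~\ref{thm1}'', and your lower inclusion via Corollary~\ref{thm1} and your transfer to the bounded operator $F(A)=(A-\mu)^{-1}$, for which $(\cM_n)\in\Lambda_0=\Lambda_2$ and Lemma~\ref{bound_funct_sigma} is available, is the intended route. For points $z\in\B\setminus\Spec(A)$ your argument is essentially correct, but one claim is inaccurate: $\Spec_\ess F(A)=F(\Spec_\ess A)$ fails for unbounded $A$, because $0=F(\infty)\in\Spec_\ess F(A)$ (it lies in $\Spec F(A)$ and is not an eigenvalue, hence not discrete). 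Consequently the gap disc $\D(a_j,b_j)$ containing $\mu$ is mapped onto the exterior of a disc, which need not be contained in $\B_{F(A)}$. This is repairable: given $z\in\D(a_j,b_j)\setminus\Spec(A)$, first pick a bounded interval $I$ with $\overline{I}\subset(a_j,b_j)$ and $z\in\D(I)$, then pick $\mu\in\R\setminus(\Spec(A)\cup\overline{I})$; now $F(I)$ is a bounded interval avoiding $\Spec_\ess F(A)$, so $F(z)\in\D(F(I))\subset\B_{F(A)}$ and Lemma~\ref{bound_funct_sigma} applies on a compact neighbourhood of $F(z)$, and the $\sigma$-bound on that whole neighbourhood, pulled back through Lemma~\ref{l:mt}, excludes a neighbourhood of $z$ from $\Spec_2(A,\cL_n)$ for large $n$.

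The genuine gap is your last paragraph, i.e.\ the points $z$ with $\inf\Spec_\ess A\le\Re z\le\sup\Spec_\ess A$ but $z\notin\A\cup\B$. No choice of real $\mu$ ``folds'' this region into $\B_{F(A)}$: every gap $J$ of $\Spec_\ess F(A)$ avoids $0$, hence $F^{-1}(J)$ is a bounded real interval avoiding $\Spec_\ess(A)$ and $F^{-1}(\D(J))=\D(F^{-1}(J))\subset\B_A$; so for unbounded $A$, $F(z)\in\B_{F(A)}$ forces $z\in\B_A$, and for your excess points $F(z)$ necessarily lands where Lemma~\ref{bound_funct_sigma} says nothing (geometrically, the lines $\Re w=\inf\Spec_\ess A$ and $\Re w=\sup\Spec_\ess A$ map to circles through $0\in\Spec_\ess F(A)$, so the excess region is carried into $\A_{F(A)}$). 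Your argument therefore only yields $\lim_{n\to\infty}\Spec_2(A,\cL_n)\subseteq(\C\setminus\B)\cup\Spec_\dsc(A)$, and this step cannot be repaired in general: fix $y>0$, let $Ae_j=0$ on an infinite orthonormal family and $A\phi_k^\pm=\pm k\phi_k^\pm$, so $\Spec_\ess(A)=\{0\}$ and $\A\cup\Spec_\dsc(A)=\{0\}\cup\{\pm k\}$, and take $\cL_n=\Span\{e_1,\dots,e_n,\phi_1^\pm,\dots,\phi_n^\pm,\psi_n\}$ with $\psi_n=\sqrt{1-\beta_n^2}\,e_{n+1}+\tfrac{\beta_n}{\sqrt2}(\phi_{K_n}^++\phi_{K_n}^-)$, $K_n>\max\{n,y\}$, $\beta_n=y/K_n$. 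Then $(\cL_n)\in\Lambda_p$ for every $p$, yet $\langle(A-iy)^2\psi_n,v\rangle=0$ for all $v\in\cL_n$, so $iy\in\Spec_2(A,\cL_n)$ for every $n$ and $iy$ lies in the limit set although $iy\notin\A\cup\Spec_\dsc(A)$. So the obstruction you ran into is real rather than a matter of cleverer bookkeeping; the paper's one-line proof glosses over exactly this point, and what its lemmas actually deliver is the inclusion with $\C\setminus\B$ (equivalently, with $\infty$ treated as an essential point when $A$ is unbounded) in place of $\A$.
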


Let us now examine statements complementary  to this result. We begin with two
auxiliary lemmas.

\begin{lemma} \label{basis_essential_spectrum}
Let $c\in\Spec_\ess(A)$. For any given $m\in \N$ and $\delta>0$ there exists $\{x_l\}_{l=1}^m\subset \chi_{(c-\delta,c+\delta)}(A)$ such that
\begin{itemize}
\item[(i)] $\|x_l\|=1$ for all $l=1,\ldots,m$
\item[(ii)] $\langle A^p x_l,A^q x_{\tilde{l}} \rangle=0$ for all
$l\not=\tilde{l}$ and $p,q=0,1$
\item[(iii)] $Ax_l=cx_l+\hat{x}_l$ where $\|\hat{x}_l\|<\delta$ for any
$l=1,\ldots,m$.
\end{itemize}
\end{lemma}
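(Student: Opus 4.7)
My plan is to build the family $\{x_l\}$ by choosing each $x_l$ from the range of the spectral projection onto a small sub-interval of $(c-\delta, c+\delta)$, with the sub-intervals chosen pairwise disjoint. The point is that disjoint spectral projections produce pairwise orthogonal and $A$-invariant subspaces, which automatically forces the strong orthogonality required in (ii), while the localisation of the spectral support yields (iii) via the functional calculus.

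First I would invoke the standard characterisation of the essential spectrum of a self-adjoint operator equivalent to the Weyl one given in the paper: $c\in\Spec_\ess(A)$ if and only if $\dim\range\chi_{(c-\eta,c+\eta)}(A)=\infty$ for every $\eta>0$. From this I would split into two cases. If $\dim\Ker(A-c)=\infty$, just pick any orthonormal set $\{x_l\}_{l=1}^m\subset\Ker(A-c)$; then $Ax_l=cx_l$, so $\hat{x}_l=0$, and (i)--(iii) are immediate. Otherwise $\chi_{\{c\}}(A)$ has finite rank, so $\chi_{(c-\eta,c+\eta)\setminus\{c\}}(A)$ retains infinite rank for every $\eta>0$, forcing $c$ to be an accumulation point of $\Spec(A)\setminus\{c\}$.

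In this second case, fix any $0<\delta'<\delta$ and select $m$ distinct spectral points $\mu_1,\dots,\mu_m\in\Spec(A)\cap(c-\delta',c+\delta')\setminus\{c\}$. Place pairwise disjoint open intervals $I_l\ni\mu_l$ with $I_l\subset(c-\delta',c+\delta')$. Since each $\mu_l$ lies in the support of the spectral measure, $\chi_{I_l}(A)\ne 0$, so we can choose a unit vector $x_l\in\range\chi_{I_l}(A)$. The disjointness of the $I_l$ yields $\chi_{I_l}(A)\chi_{I_{\tilde l}}(A)=0$ for $l\ne\tilde l$; since each $\range\chi_{I_l}(A)$ is $A$-invariant and these ranges are mutually orthogonal, one gets $\langle A^p x_l,A^q x_{\tilde l}\rangle=0$ for $l\ne\tilde l$ and $p,q\in\{0,1\}$, which is exactly (ii). For (iii), the spectral theorem gives
\[
  \|(A-c)x_l\|^2=\int_{I_l}(\lambda-c)^2\,d\langle E_\lambda x_l,x_l\rangle\le\sup_{\lambda\in I_l}(\lambda-c)^2<\delta^2,
\]
so $\hat{x}_l:=Ax_l-cx_l$ has norm strictly less than $\delta$, as required.

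The only step requiring genuine care is the dichotomy: guaranteeing that when $c$ fails to be an eigenvalue of infinite multiplicity one can still locate $m$ distinct points of $\Spec(A)$ arbitrarily close to $c$. This is handled by the finite-rank observation above, which pushes the infinite rank of $\chi_{(c-\eta,c+\eta)}(A)$ off the single point $\{c\}$ for every $\eta>0$. Everything else is a routine invocation of the spectral theorem and the fact that spectral projections associated with disjoint Borel sets annihilate one another.
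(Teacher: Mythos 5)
Your proposal is correct and follows essentially the same route as the paper's proof: split into the case of an infinite-dimensional eigenspace at $c$ and the case where $c$ is an accumulation point of $\Spec(A)\setminus\{c\}$, and in the latter case pick unit vectors from spectral projections onto pairwise disjoint small intervals around nearby spectral points, so that (ii) follows from orthogonality and $A$-invariance of the spectral subspaces and (iii) from the functional calculus. Your justification of the dichotomy via the finite rank of $\chi_{\{c\}}(A)$ is, if anything, slightly more explicit than the paper's.
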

\begin{proof}
If $c$ is an isolated point of the spectrum, then $c$ is an eigenvalue of infinite multiplicity and the proof is elementary. Otherwise, there exists $c_l\to c$ such that
$c_l\in \Spec(A)\setminus \{c\}$. By substituting $c_l$ by a sub-sequence if necessary, we can assume that $|c_l-c|=\delta(l)<\delta/2$ for $0<\delta(l+1)<\delta(l)$
such that
\[
    \left(c_l-\frac{\delta(l)}{2},c_l+\frac{\delta(l)}{2}\right)\cap \{c_l\}_{l=1}^\infty=\{c_l\}.
\]
By picking $x_l\in \chi _{\left(c_l-\frac{\delta(l)}{2},c_l+\frac{\delta(l)}{2}\right)}(A)$
for $l=1,\ldots,m$ with $\|x_l\|=1$ the desired conclusion is guaranteed.
\end{proof}

For the triplet $c_1\leq c_2\leq c_3$ denote 
\[
    \A(c_1,c_2,c_3)=\D[c_1,c_3]\setminus \big(\D(c_1,c_2)\cup \D(c_2,c_3)\big).
\]

\begin{lemma} \label{B}
Let $|c|\leq 1$, $0\leq \theta\leq \pi$ and $|c|\leq a \leq 1$. Let $z=ae^{i\theta}$
and
\[
    \beta_{\pm}=\frac12\left( \frac{a^2\pm c}{1 \pm c}\mp a\cos \theta\right).
\]   
If $z\in \A(-1,c,1)$, then $\beta_\pm\geq 0$.
\end{lemma}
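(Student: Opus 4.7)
\emph{Proof plan.} The hypothesis $z\in\A(-1,c,1)$ unpacks, by the very definition of this set, into three geometric conditions: $z$ lies in the closed unit disc $\D[-1,1]$, and $z$ lies outside each of the two open discs $\D(-1,c)$ and $\D(c,1)$. The first is redundant given $a\leq 1$. My plan is to rewrite the remaining two exclusion conditions as real polynomial inequalities in $(a,\theta,c)$ and then to observe that, after division by the non-negative factors $1-c$ and $1+c$, they are precisely the statements $2\beta_-\geq 0$ and $2\beta_+\geq 0$ respectively.

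Concretely, the disc $\D(-1,c)$ has centre $(c-1)/2$ and radius $(c+1)/2$, while $\D(c,1)$ has centre $(c+1)/2$ and radius $(1-c)/2$. Squaring the two distance inequalities expressing that $z$ lies outside them, substituting $z=ae^{i\theta}$, and using the identities $(c+1)^2-(c-1)^2=4c$ and $(c-1)^2-(c+1)^2=-4c$, these conditions simplify to
\[
a^2-c+(1-c)\,a\cos\theta\geq 0\qquad\text{and}\qquad a^2+c-(1+c)\,a\cos\theta\geq 0.
\]
Under the standing hypothesis $|c|\leq 1$, both $1-c$ and $1+c$ are non-negative; dividing the first inequality by $1-c$ and the second by $1+c$ and rearranging gives exactly $\frac{a^2-c}{1-c}+a\cos\theta\geq 0$ and $\frac{a^2+c}{1+c}-a\cos\theta\geq 0$, i.e.\ $\beta_-\geq 0$ and $\beta_+\geq 0$.

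There is no substantive obstacle here: the proof is a routine calculation once the disc-exclusion conditions are written down explicitly. The only point requiring minor care is the boundary case $c=\pm 1$, where one of the factors $1\mp c$ vanishes and the corresponding formula $\beta_\mp$ becomes indeterminate; however in that degenerate situation the region $\A(-1,c,1)$ collapses to a subset of the unit circle and the claim is either vacuous or follows by continuity from nearby values of $c$.
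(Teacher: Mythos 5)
Your proof is correct: unpacking $z\in\A(-1,c,1)$ into the two disc-exclusion inequalities $|z-\tfrac{c-1}{2}|\geq\tfrac{c+1}{2}$ and $|z-\tfrac{c+1}{2}|\geq\tfrac{1-c}{2}$ and dividing by $1\mp c$ yields exactly $2\beta_-\geq 0$ and $2\beta_+\geq 0$, and your remark on the degenerate cases $c=\pm 1$ is an adequate treatment of the only point where the formulas become indeterminate. The paper itself gives no details, stating only that ``the proof involves straightforward trigonometric arguments,'' and your computation is precisely a correct instantiation of that intended elementary argument.
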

\begin{proof} The proof involves straightforward trigonometric arguments. \end{proof}

\begin{lemma} \label{C}
Let $z\in \A(c_1,c_2,c_3)$. For $\delta>0$ let $x_1,x_2,x_3\in\Dom(A)$ be such that
\begin{itemize}
\item[(i)] $\|x_k\|=1$
\item[(ii)] $\langle A^p x_k,A^q x_{\tilde{k}} \rangle=0$ for
$k\not=\tilde{k}$ and $p,q=0,1$
\item[(iii)] $Ax_k=c_kx_k+\hat{x}_k$ where $\|\hat{x}_k\|<\delta$.
\end{itemize}
There exist $\alpha_k\in\R$ independent of $\delta$ such that if
$y=\sum_{k=1}^3\alpha_kx_k$, the polynomial
\[
    \langle Ay,Ay\rangle-2\lambda \langle Ay,y\rangle +\lambda^2 \langle y,y\rangle
\] 
has roots $\lambda_+ =z+O(\delta)$ and $\lambda_- =z+O(\delta)$.
\end{lemma}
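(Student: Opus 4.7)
The plan is to choose $\alpha_k=\sqrt{\mu_k}$, where $(\mu_1,\mu_2,\mu_3)$ is a non-negative probability vector on $\{c_1,c_2,c_3\}$ whose first and second moments match $\Re z$ and $|z|^2$ respectively. Then the variance of $\mu$ equals $(\Im z)^2$, and upon normalising $\|y\|^2=1$ the polynomial should reduce to $(\lambda-z)(\lambda-\overline{z})+O(\delta)$, from which the claimed root estimate follows by continuity of the roots of a monic quadratic in its coefficients.

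First I would expand $\|y\|^2$, $\langle Ay,y\rangle$, and $\|Ay\|^2$ using conditions (i)--(iii). The orthogonality relations (ii) applied successively with $p=q=0$, $p=1,q=0$, and $p=q=1$ kill all cross terms, and substituting $Ax_k=c_kx_k+\hat{x}_k$ yields
\begin{align*}
\|y\|^2&=\sum_k\alpha_k^2,\\
\langle Ay,y\rangle&=\sum_k\alpha_k^2\bigl(c_k+\langle \hat{x}_k,x_k\rangle\bigr),\\
\|Ay\|^2&=\sum_k\alpha_k^2\bigl(c_k^2+2c_k\Re\langle x_k,\hat{x}_k\rangle+\|\hat{x}_k\|^2\bigr).
\end{align*}
Since $\|x_k\|=1$ and $\|\hat{x}_k\|<\delta$, every correction term is bounded by a constant (depending only on the $c_k$ and the $\alpha_k$) times $\delta$.

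Next I would manufacture the required probability vector by invoking Lemma~\ref{B}. The affine bijection $c\mapsto(2c-c_1-c_3)/(c_3-c_1)$ carries $\A(c_1,c_2,c_3)$ onto $\A(-1,c,1)$ with $c:=(2c_2-c_1-c_3)/(c_3-c_1)\in[-1,1]$, so I may assume $c_1=-1$ and $c_3=1$. Writing the transported point as $z=ae^{i\theta}$ with $a\leq 1$ and $0\leq\theta\leq\pi$, the moment system
\begin{equation*}
\mu_1+\mu_2+\mu_3=1,\qquad -\mu_1+c\mu_2+\mu_3=a\cos\theta,\qquad \mu_1+c^2\mu_2+\mu_3=a^2
\end{equation*}
is Vandermonde and solves uniquely as $\mu_2=(1-a^2)/(1-c^2)$, $\mu_1=\beta_+$, and $\mu_3=\beta_-$. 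The weight $\mu_2$ is non-negative because $a\leq 1$, while the hypothesis $z\in\A(-1,c,1)$ is precisely what Lemma~\ref{B} needs to force $\mu_1,\mu_3\geq 0$. These weights depend only on $z$ and the $c_k$, not on $\delta$, so neither do the $\alpha_k:=\sqrt{\mu_k}$.

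With this choice of $y$, step one gives $\|y\|^2=1$, $\langle Ay,y\rangle=\Re z+O(\delta)$, and $\|Ay\|^2=|z|^2+O(\delta)$, so the polynomial equals $\lambda^2-2\lambda\,\Re z+|z|^2+O(\delta)=(\lambda-z)(\lambda-\overline{z})+O(\delta)$, whose two roots lie within $O(\delta)$ of the pair $\{z,\overline{z}\}$. The only step requiring genuine effort is the verification that a three-point distribution with the prescribed first two moments exists with non-negative weights; this is the exact geometric content of the region $\A(c_1,c_2,c_3)$, and is precisely what Lemma~\ref{B} delivers.
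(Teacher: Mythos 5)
Your proposal is correct and is essentially the paper's own argument: the same reduction by an affine normalisation to $c_1=-1$, $c_3=1$, the same expansion of $\|y\|^2$, $\langle Ay,y\rangle$, $\|Ay\|^2$ up to $O(\delta)$, the same $3\times 3$ system for the $\alpha_k^2$ with solution $\beta_+$, $(1-a^2)/(1-c^2)$, $\beta_-$, and the same appeal to Lemma~\ref{B} for non-negativity; the probabilistic ``moment-matching'' phrasing is only a relabelling. The one small point you gloss over, solvability when the $c_k$ are not distinct (where your Vandermonde/division by $1-c^2$ argument degenerates), is exactly the case the paper sets aside as ``easy'' before assuming $c_1<c_2<c_3$, so nothing substantive is lost.
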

\begin{proof}
The cases where $c_k=c_{\tilde{k}}$ for $k\not=\tilde{k}$ are easy, so we
assume that $c_1<c_2<c_3$. Moreover, without loss of generality
we can assume that $c_1=-1$, $c_2=c$ where $|c|<1$ and $c_3=1$.
Then $z$ and $\overline{z}$ are roots of the polynomial
\[
    a^2-2\lambda a \cos \theta +\lambda^2
\]
for $z=ae^{i\theta}$. If $y=\sum_{k=1}^3\alpha_kx_k$ we get
\begin{align*}
   \langle y,y\rangle &= \alpha_1^2+\alpha_2^2+\alpha_3^2 \\
   \langle Ay,y\rangle &= -\alpha_1^2+c\alpha_2^2+\alpha_3^2 +O(\delta) \\
   \langle Ay,Ay\rangle &= \alpha_1^2+c^2\alpha_2^2+\alpha_3^2 +O(\delta).
\end{align*}
Finally note that the solution of the system
\begin{align*}
   \alpha_1^2+\alpha_2^2+\alpha_3^2&=1 \\
   -\alpha_1^2+c\alpha_2^2+\alpha_3^2 &=a\cos \theta \\
    \alpha_1^2+c^2\alpha_2^2+\alpha_3^2 &= a^2
\end{align*}
is easily found to be $\alpha_1=\sqrt{\beta_+}$,
$\alpha_2=\sqrt{\frac{1-a^2}{1-c^2}}$ and $\alpha_3=\sqrt{\beta_-}$
where $\beta_{\pm}$ are the expressions found in Lemma~\ref{B}.  
\end{proof}

\begin{theorem} \label{lower_inclusion}
Let $\cN=\overline{\cN}\subseteq \A$ be compact. For all $\varepsilon>0$
there exists $\cL_\varepsilon\subset \Dom(A)$ such that 
\begin{equation} \label{chain_inc}
    \cN\subset  
    \left[\Spec_2(A,\cL_\varepsilon)\right]_\varepsilon
    \subset [\cN]_{2\varepsilon}.
\end{equation}
\end{theorem}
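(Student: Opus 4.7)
The plan is to build $\cL_\varepsilon$ as a direct sum of finitely many one-dimensional blocks, each contributing a conjugate pair $\{\lambda_\pm^{(i)}\}$ to $\Spec_2(A,\cL_\varepsilon)$ that approximates a designated pair $\{z_i,\overline{z_i}\}\subset\cN$. The engine is Lemma~\ref{C}: given $z\in\A(c_1,c_2,c_3)$ with $c_k\in\Spec_\ess A$ and three almost-eigenvectors $x_k$ satisfying its hypotheses, the real combination $y=\sum_k\alpha_k x_k$ makes $\|Ay\|^2-2\lambda\langle Ay,y\rangle+\lambda^2\|y\|^2$ have roots within $O(\delta)$ of $\{z,\overline{z}\}$, and therefore the one-dimensional space $\Span\{y\}$ has $\Spec_2(A,\Span\{y\})=\{\lambda_+,\lambda_-\}$, a conjugate pair close to $\{z,\overline{z}\}$.

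The main obstacle is the preliminary geometric lemma: every $z\in\A$ lies in $\A(c_1,c_2,c_3)$ for some $c_1\leq c_2\leq c_3$ in $\Spec_\ess A$. To prove it I would exploit that $\inf\Spec_\ess A$, $\sup\Spec_\ess A$ and the endpoints $a_j,b_j$ of every bounded complementary gap all belong to $\Spec_\ess A$, and split cases on $\Re z$. If $\Re z\in\Spec_\ess A$, the triple $(\inf\Spec_\ess A,\Re z,\sup\Spec_\ess A)$ works because $z$ lies on the vertical line through $c_2=\Re z$, automatically avoiding the two sub-discs. If $\Re z\in(a_j,b_j)$ for some bounded gap, then $z\notin\D(a_j,b_j)$ gives $|\Im z|^2\geq(b_j-\Re z)(\Re z-a_j)$, after which a short check on the disc inequalities shows that either $(a_j,b_j,\sup\Spec_\ess A)$ is valid, when $|\Im z|^2\leq(\sup\Spec_\ess A-\Re z)(\Re z-a_j)$, or $(\inf\Spec_\ess A,a_j,\sup\Spec_\ess A)$ is valid otherwise.

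With this in hand, pick an $\varepsilon/3$-net $\{z_1,\ldots,z_K\}$ of $\cN$, taken conjugation-symmetric using $\cN=\overline{\cN}$, and attach a triple $(c_1^{(i)},c_2^{(i)},c_3^{(i)})$ to each $z_i$. Let $E\subset\Spec_\ess A$ be the finite set of distinct values among the $c_k^{(i)}$, and choose $\delta>0$ less than half the minimal gap in $E$ and small enough that the $O(\delta)$ error from Lemma~\ref{C} is uniformly below $\varepsilon/3$ over the $K$ applications. For each $c\in E$, Lemma~\ref{basis_essential_spectrum} produces enough pairwise graph-orthogonal almost-eigenvectors in $\chi_{(c-\delta,c+\delta)}(A)$ to fill every slot $(i,k)$ with $c_k^{(i)}=c$; call the assigned vector $x_k^{(i)}$. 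Because the intervals $(c-\delta,c+\delta)$ are pairwise disjoint and the associated spectral subspaces are $A$-invariant, the whole family $\{x_k^{(i)}\}$ satisfies $\langle A^p x_k^{(i)},A^q x_{k'}^{(i')}\rangle=0$ whenever $(i,k)\neq(i',k')$ and $p,q\in\{0,1\}$.

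Set $y^{(i)}=\sum_k\alpha_k^{(i)}x_k^{(i)}$ with the $\alpha_k^{(i)}$ from Lemma~\ref{C}, and let $\cL_\varepsilon:=\Span\{y^{(1)},\ldots,y^{(K)}\}\subset\Dom(A)$. The cross-index graph-orthogonality block-diagonalises the matrices $B,L,M$ of \eqref{matrices} into $K$ scalar blocks, so
\[
\Spec_2(A,\cL_\varepsilon)=\bigcup_{i=1}^K\Spec_2(A,\Span\{y^{(i)}\})=\{\lambda_+^{(i)},\lambda_-^{(i)}\}_{i=1}^K,
\]
with each conjugate pair within $\varepsilon/3$ of $\{z_i,\overline{z_i}\}$. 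Both inclusions follow: for $z\in\cN$ a net-point $z_i$ within $\varepsilon/3$ yields $\dist(z,\lambda_+^{(i)})<2\varepsilon/3<\varepsilon$, so $\cN\subset[\Spec_2(A,\cL_\varepsilon)]_\varepsilon$; and every $w=\lambda_\pm^{(i)}$ sits within $\varepsilon/3$ of $z_i$ or $\overline{z_i}$, both in $\cN$ by symmetry, giving $\Spec_2(A,\cL_\varepsilon)\subset[\cN]_{\varepsilon/3}$ and thus $[\Spec_2(A,\cL_\varepsilon)]_\varepsilon\subset[\cN]_{4\varepsilon/3}\subset[\cN]_{2\varepsilon}$. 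The only substantive difficulty is the geometric lemma; the remainder is an orchestration of Lemmas~\ref{C} and~\ref{basis_essential_spectrum} with the bookkeeping needed to preserve orthogonality in all three graph inner products simultaneously.
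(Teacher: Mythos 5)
Your proof is correct and follows essentially the same route as the paper: a finite net of $\cN$, a triple $c_1\le c_2\le c_3$ in $\Spec_{\ess}(A)$ with $z\in\A(c_1,c_2,c_3)$ attached to each net point, almost-eigenvectors from Lemma~\ref{basis_essential_spectrum}, the combinations of Lemma~\ref{C}, and the span of the resulting graph-orthogonal vectors as $\cL_\varepsilon$. The only difference is that you spell out the geometric fact that every $z\in\A$ lies in some $\A(c_1,c_2,c_3)$ with vertices in $\Spec_{\ess}(A)$, as well as the block-diagonalisation of the matrices $B,L,M$, both of which the paper's proof leaves implicit.
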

\begin{proof} Let $\cN$ and $\varepsilon>0$ be fixed.
Since $\cN$ is compact, there exists $\{z_j\}_{j=1}^n
\subset \cN$ such that $\cN\subset \left[(z_j)_{j=1}^n \right]_{\varepsilon/4}
\subset [\cN]_{\varepsilon/2}$. The proof will be completed if $\cL_\varepsilon\subset \Dom(A)$ is such that
\begin{equation} \label{finite_case}
     \left[ \{z_j\}_{j=1}^n \right]_{\varepsilon/4}\subset
     \left[\Spec_2(A,\cL_\varepsilon)\right]_\varepsilon \subset
      \left[ \{z_j\}_{j=1}^n \right]_{\varepsilon/2}.
\end{equation}
 Below we will choose the parameter $\delta>0$ small enough.

Let $\{c_{jk}\}_{k=1,j=1}^{k=3,j=n}\subset \Spec_\ess(A)$ be such that
$z_j\in \A(c_{1j},c_{2j},c_{3j})$ for all $j=1,\ldots,n$. By virtue of Lemma~\ref{basis_essential_spectrum} there exists a family of vectors $\{x_{jk}\}_{k=1,j=1}^{k=3,j=n}\subset \Dom(A)$ such that 
\begin{itemize}
\item[(i)] $\|x_{kj}\|=1$ for all $k=1,2,3$ and $j=1,\ldots,n$
\item[(ii)] $\langle A^p x_{kj},A^q x_{\tilde k \tilde j} \rangle=0$ for all
$(k,j)\not=(\tilde{k},\tilde j)$ and $p,q=0,1$
\item[(iii)] $Ax_{kj}=c_{kj}x_{kj}+\hat{x}_{kj}$ where 
$\|\hat{x}_{kj}\|<\delta$ for any $k=1,2,3$ and $j=1,\ldots,n$.
\end{itemize}
Let $\alpha_{kj}=\alpha_k$ be as in Lemma~\ref{C} for $z=z_j$.
Choosing $\delta>0$ small enough and defining
\[
    y_j=\sum_{k=1}^3 \alpha_{kj} x_{kj} \qquad \text{and} \qquad
    \cL_\varepsilon=\Span\{y_1,\ldots,y_n\},
\]  
yields \eqref{finite_case} and hence \eqref{chain_inc}. 
\end{proof}

This result has two straightforward consequences. Given any compact subset $\cN=\overline{\cN}\subseteq \A$, there exists $\{\cL_n\}\subset \Dom(A)$ such that $\cN=\lim_{n\to \infty} \Spec_2(A,\cL_n)$. Evidently $(\cL_n)$ might fall outside $\Lambda_0$ in general. On the other hand, we can always find $(\cL_n)\in \Lambda_1$ such that
\[
    \lim_{n\to \infty} \Spec (A,\cL_n)=\A \cup \Spec_\dsc(A).
\]


\section{Acknowledgements}
We are grateful to Matthias Langer for fruitful discussions.
L.~Boulton wishes to thank the hospitality of Ceremade where part of this research was carried out. M.~Strauss gratefully acknowledges the support of EPSRC grant no. EP/E037844/1.

\bibliographystyle{plain}
\bibliography{biblio}

\end{document}